\pgfplotsset{compat=newest}
\newcommand{\logLogSlopeTriangle}[5]
{

    \pgfplotsextra
    {
        \pgfkeysgetvalue{/pgfplots/xmin}{\xmin}
        \pgfkeysgetvalue{/pgfplots/xmax}{\xmax}
        \pgfkeysgetvalue{/pgfplots/ymin}{\ymin}
        \pgfkeysgetvalue{/pgfplots/ymax}{\ymax}

        \pgfmathsetmacro{\xArel}{#1}
        \pgfmathsetmacro{\yArel}{#3}
        \pgfmathsetmacro{\xBrel}{#1-#2}
        \pgfmathsetmacro{\yBrel}{\yArel}
        \pgfmathsetmacro{\xCrel}{\xArel}

        \pgfmathsetmacro{\lnxB}{\xmin*(1-(#1-#2))+\xmax*(#1-#2)} 
        \pgfmathsetmacro{\lnxA}{\xmin*(1-#1)+\xmax*#1} 
        \pgfmathsetmacro{\lnyA}{\ymin*(1-#3)+\ymax*#3} 
        \pgfmathsetmacro{\lnyC}{\lnyA+#4*(\lnxA-\lnxB)}
        \pgfmathsetmacro{\yCrel}{\lnyC-\ymin)/(\ymax-\ymin)} 
        
        \coordinate (A) at (rel axis cs:\xArel,\yArel);
        \coordinate (B) at (rel axis cs:\xBrel,\yBrel);
        \coordinate (C) at (rel axis cs:\xCrel,\yCrel);

        \draw[#5]   (A)-- node[pos=0.5,anchor=north] {1}
                    (B)-- 
                    (C)-- node[pos=0.5,anchor=west] {#4}
                    cycle;
    }
}
\newcommand{\logLogSlopeTriangleinv}[5]
{

    \pgfplotsextra
    {
        \pgfkeysgetvalue{/pgfplots/xmin}{\xmin}
        \pgfkeysgetvalue{/pgfplots/xmax}{\xmax}
        \pgfkeysgetvalue{/pgfplots/ymin}{\ymin}
        \pgfkeysgetvalue{/pgfplots/ymax}{\ymax}

        \pgfmathsetmacro{\xArel}{#1}
        \pgfmathsetmacro{\yArel}{#3}
        \pgfmathsetmacro{\xBrel}{#1-#2}
        \pgfmathsetmacro{\yBrel}{\yArel}
        \pgfmathsetmacro{\xCrel}{\xBrel}

        \pgfmathsetmacro{\lnxB}{\xmin*(1-(#1-#2))+\xmax*(#1-#2)} 
        \pgfmathsetmacro{\lnxA}{\xmin*(1-#1)+\xmax*#1} 
        \pgfmathsetmacro{\lnyA}{\ymin*(1-#3)+\ymax*#3} 
        \pgfmathsetmacro{\lnyC}{\lnyA+#4*(\lnxA-\lnxB)}
        \pgfmathsetmacro{\yCrel}{(\lnyC-\ymin)/(\ymax-\ymin)} 
        
        \coordinate (A) at (rel axis cs:\xArel,\yArel);
        \coordinate (B) at (rel axis cs:\xBrel,\yBrel);
        \coordinate (C) at (rel axis cs:\xCrel,\yCrel);

        \draw[#5]   (A)-- node[pos=0.5,anchor=north] {1}
                    (B)-- node[pos=0.5,anchor=east] {#4}
                    (C)-- 
                    cycle;
    }
}
\newtheorem{assumption}{Assumption}
\newcommand{\triple}[1]{{\left\vert\kern-0.12ex\left\vert\kern-0.12ex\left\vert #1
    \right\vert\kern-0.12ex\right\vert\kern-0.12ex\right\vert}}
\DeclareMathOperator{\Div}{div}
\newcommand{\Th}{\mathcal{T}_{h}}
\newcommand{\ddn}[1]{\frac{\partial #1}{\partial n}}
\newcommand{\R}{\mathbb{R}}
\newcommand{\assign}{:=}
\newtheorem{theorem}{Theorem}[section]
\newtheorem{lemma}[theorem]{Lemma}
\newtheorem{proposition}{Proposition}
\newtheorem{remark}{Remark}
\title
{A new $\phi$-FEM approach for problems with natural boundary conditions}
\author
{Michel Duprez
\footnote{CEREMADE,  Universit\'e Paris-Dauphine \& CNRS UMR 7534, Universit\'e PSL, 75016 Paris, France.
\texttt{mduprez@math.cnrs.fr}}
and Vanessa Lleras
\footnote{IMAG, Univ Montpellier, CNRS, Montpellier, France.
\texttt{vanessa.lleras@umontpellier.fr}}
and Alexei Lozinski
\footnote{Laboratoire de Math\'ematiques de Besan\c{c}on, UMR CNRS 6623,
Universit\'e Bourgogne Franche-Comt\'e,
16, route de Gray, 25030 Besan\c{c}on Cedex,
France.
\texttt{alexei.lozinski@univ-fcomte.fr}}}
\begin{document}

\maketitle
\begin{abstract}
We present a new finite element method, called $\phi$-FEM, to solve numerically elliptic partial differential equations with natural (Neumann or Robin) boundary conditions using simple computational grids, not fitted to the boundary of the physical domain. The boundary data are taken into account using a level-set function, which is a popular tool to deal with complicated or evolving domains. Our approach belongs to the family of fictitious domain methods (or immersed boundary methods) and is close to recent methods of cutFEM/XFEM type. Contrary to the latter, $\phi$-FEM does not need any non-standard numerical integration on cut mesh elements or on the actual boundary, while assuring the optimal convergence orders with finite elements of any degree and providing reasonably well conditioned discrete problems. In the first version of $\phi$-FEM, only essential (Dirichlet) boundary conditions was considered. Here, to deal with natural boundary conditions, we introduce the gradient of the primary solution as an auxiliary variable. This is done only on the mesh cells cut by the boundary, so that the size of the numerical system is only slightly increased . We prove theoretically the optimal convergence of our scheme and a bound on the discrete problem conditioning, independent of the mesh cuts. The numerical experiments confirm these results.
\end{abstract}


	\section{Introduction}
	
	We consider a second order elliptic partial differential equation with Neumann boundary conditions
	\begin{equation}\label{Neu:P}
	- \Delta u + u = f \text{ in } \Omega,\quad
	\frac{\partial u}{\partial n} =  0 \text{ on } \Gamma
	\end{equation}
	in a bounded domain $\Omega\subset\mathbb{R}^d$ ($d=2,3$) with smooth boundary $\Gamma$ assuming that $\Omega$ and $\Gamma$ are given by a level-set function $\phi$:
	\begin{equation}\label{eq:level-set}
	\Omega:=\{\phi<0\} \text{ and } \Gamma:=\{\phi=0\}.
	\end{equation}
	Such a representation is a popular and useful tool to deal with problems with evolving surfaces or interfaces \cite{osher}.
	In the present article, the level-set function is supposed known on $\R^d$, smooth, and to behave near $\Gamma$ similar to the signed distance to $\Gamma$.
	
	Our goal is to develop a finite element method for (\ref{Neu:P}) using a mesh which is not fitted to $\Gamma$, i.e. we allow the boundary $\Gamma$ to cut the mesh cells in an arbitrary manner. The existing finite elements methods on non-matching meshes, such as the fictitious domain/penalty method \cite{glowinski92}, XFEM \cite{moes99,moes06,sukumar2001,HaslingerRenard}, CutFEM \cite{burman2,burman15} (see also \cite{mittal2005immersed} for a review on immersed boundary methods) contain the integrals over the physical domain $\Omega$ and thus necessitate non-standard numerical integration on the parts of mesh cells cut by $\Gamma$. In this article, we propose a finite element method, based on an alternative variational formulation on an extended domain matching the computational mesh, thus avoiding any non-standard quadrature while maintaining the optimal accuracy and controlling the conditioning uniformly with respect to the position of $\Omega$ over the mesh.
	
	In the recent article \cite{phiFEM}, we have proposed such a method for the Poisson problem with homogeneous Dirichlet boundary conditions $u=0$ on $\Gamma$. The idea behind this method, baptised $\phi$-FEM, is to put $u=\phi w$ so that $u=0$ on $\Gamma $ for whatever $w$ since $\phi=0$ there. We then replace $\phi$ and $w$ by the finite element approximations $\phi_h$ and $w_h$, substitute $u\approx \phi_h w_h$ into an appropriate variational formulation and get an easily implementable discretization  in terms of the new unknown $w_h$. Such a simple idea cannot be used directly to discretize the Neumann boundary conditions in (\ref{Neu:P}). Indeed, multiplication by $\phi$ works well to strongly impose the $\textbf{essential}$ Dirichlet boundary conditions whereas Neumann conditions are $\textbf{natural}$, i.e. they come out  of the usual variational formulation without imposing them into the functional spaces.  We want thus to reformulate Problem (\ref{Neu:P}) so that Neumann conditions become essential. The way to go is the dualization of this problem, in the terminology of \cite{boffi13}, consisting in introducing an auxiliary (vector-valued) variable for the gradient $\nabla u$. In the present article, we want to use the usual conforming scalar finite elements as much as possible. Accordingly, we do not pursue the classical route of mixed methods, as in Chapter 7 of \cite{boffi13}. We shall rather introduce the additional unknowns only where they are needed, i.e. in the vicinity of boundary $\Gamma$.
	
	More specifically, let us assume that $\Omega$ lies inside a simply shaped domain $\mathcal{O}$ (typically a box in $\R^d$) and introduce a quasi-uniform simplicial mesh $\Th^{\mathcal{O}}$ on $\mathcal{O}$ (the background mesh). Let $\Th$ be a submesh of $\Th^{\mathcal{O}}$ obtained by getting rid of mesh elements lying entirely outside $\Omega$ (the definition of $\Th$ will be slightly changed afterwords). Denote by $\Omega_h$ the domain covered by mesh $\Th$ ($\Omega_h$ only slightly larger than $\Omega$) and by $\Omega_h^\Gamma$ the domain covered by mesh elements of $\Th$ cut by $\Gamma$ (a narrow strip of width $\sim h$ around $\Gamma$). Assume that the right-hand side $f$ is actually well defined on $\Omega_h$ and imagine for the moment that the solution $u$ of eq.~(\ref{Neu:P}) can be extended to a function on $\Omega_h$, still denoted by $u$, which  solves the same equation, now on $\Omega_h$: 
	\begin{equation}\label{reform1}
	- \Delta u + u = f,\quad \text{in } \Omega_h\,.
	\end{equation}
	As announced above, we now introduce an auxiliary vector-valued unknown $y$ on $\Omega_h^\Gamma$, setting $y=-\nabla u$ there, so that $u,y$ satisfy the dual form of the original equation
	\begin{equation}\label{reform2}
	y+\nabla u=0\,,
	\quad
	\Div y + u = f, \quad \text{in } \Omega_h^{\Gamma}\,.
	\end{equation}
	This allows us to rewrite the natural boundary condition $\frac{\partial u}{\partial n} =  0$  on $\Gamma$ as the essential condition on $y$: $y\cdot n=0$ on $\Gamma$. The latter can now be imposed using the idea of multiplication by the level-set $\phi$. To this end, we note that the outward-looking unit normal $n$ is given on $\Gamma$ by
	$
	n=\frac{1}{|\nabla\phi|}\nabla\phi\,.
	$
	Hence, we have $y\cdot n=0$ on $\Gamma$ if we put
	\begin{equation}\label{reform4}
	y\cdot \nabla\phi + p\phi  =0, \quad  \text{in } \Omega_h^\Gamma,
	\end{equation}
	where $p$ is yet another (scalar-valued) auxiliary unknown on $\Omega_h^\Gamma$. 
	
	Our finite element method, cf. (\ref{Neu:Ph}) below, will be based on a variational formulation of system (\ref{reform1})--(\ref{reform4}) treating eqs.~(\ref{reform2})--(\ref{reform4}) in a least squares manner and adding a stabilization in the vein of the Ghost penalty \cite{burmanghost}. As in \cite{phiFEM}, we coin our method $\phi$-FEM in accordance with the tradition of denoting the level-sets by $\phi$. Contrary to \cite{phiFEM}, we need here additional finite element unknowns discretizing $y$ and $p$ on $\Omega_h^\Gamma$. Since, the latter represents only a small portion of the whole computational domain $\Omega_h$, the extra cost induced by these unknowns is negligible as $h\to 0$.
	We want to emphasize that the reformulation (\ref{reform1})--(\ref{reform4}) is very formal and will serve only as a motivation for our discrete scheme (\ref{Neu:Ph}). The system (\ref{reform1})--(\ref{reform4}) itself is clearly over-determined and may well be ill-posed (the ``boundary'' conditions hidden in (\ref{reform4}) are actually not on the boundary of domain $\Omega_h$ where the problem is now posed). We shall assume neither the existence of a continuous solution to (\ref{reform1})--(\ref{reform4}), nor any properties of such a solution in the theoretical analysis of our scheme, cf. Theorem \ref{th:error}.

	The article is organized as follows: our $\phi$-FEM method is presented in the next section. We also give there the assumptions on the level-set $\phi$ and on the mesh, and announce our main result: the \textit{a priori} error estimate for $\phi$-FEM in the Neumann case. We work with standard continuous $\mathbb{P}_k$ finite elements ($k\ge 1$) on a simplicial mesh and prove the optimal order $h^k$ for the error  in the $H^1$ norm and the (slightly) suboptimal order $h^{k+1/2}$ for the error  in the $L^2$ norm. We note in passing that employing finite elements of any order is quite straightforward in our approach contrary to more traditional schemes of CutFEM type, cf. \cite{boiveau18,lehren16} for a special treatment of the case $k>1$.  The proofs of the error estimates are the subject of Section 3. Moreover, we show in Section 4 that the associated finite element matrix has the condition number of order $1/h^2$, i.e. of the same order as that of a standard finite element method on a matching grid of comparable size. In particular, the conditioning of our method does not suffer from arbitrarily bad intersections of $\Gamma$ with the mesh.  Numerical illustrations are given in Section 5. 

	\section{Definitions, assumptions, description of {$\phi$}-FEM, and the main result}
	
	Assume $\Omega\subset\mathcal{O}$ and let $\Th^{\mathcal{O}}$ be a quasi-uniform simplicial mesh on $\mathcal{O}$ with $h=\max_{T\in\Th}\operatorname{diam}T$ and $\rho(T)\ge \beta h$ for all $T\in\Th^{\mathcal{O}}$ with the mesh regularity parameter $\beta>0$ fixed once for all (here $\rho(T)$ is the radius of the largest ball inscribed in $T$). Fix integers $k,l \ge 1$ and let $\phi_h$ be the
	FE interpolation of $\phi$ on $\Th^{\mathcal{O}}$  by the usual continuous finite elements of degree $l$.\footnote{The integer $k$ is the degree of finite elements which will be used to approximate the principal unknown $u$ while $\phi$ is approximated by finite elements of degree $l$. We shall require $l\geq k + 1$ in our convergence Theorem \ref{th:error}. Note, that we cannot set $l=k$ unlike the Dirichlet case in \cite{phiFEM}. This is essentially due to the fact that $\phi_h$ is used here to approximate the normal on $\Gamma$ in addition to approximating $\Gamma$ itself.}
	Let $\Gamma_h:=\{\phi_h=0\}$ and introduce the computational mesh $\Th$ (approximately) covering $\Omega$ and the auxiliary mesh $\Th^\Gamma$ covering $\Gamma_h$:
	\begin{align*}
	\mathcal{T}_h &= \{T \in \mathcal{T}_h^{\mathcal{O}} : T \cap \{\phi_h<0\} \neq
	\varnothing\}  &\text{ and }& \Omega_h = (\cup_{T \in
		\mathcal{T}_h} T)^{\circ} ,
	\\
	\mathcal{T}_h^{\Gamma} &= \{T \in \mathcal{T}_h : T \cap \Gamma_h \neq
	\varnothing
	\}  &\text{ and }& \Omega_h^{\Gamma} = (\cup_{T \in
		\mathcal{T}_h^{\Gamma}} T)^{\circ} .
	\end{align*}
	We shall also denote by $\Omega_h^i=\Omega_h\setminus\Omega_h^\Gamma$ the domain of mesh elements completely inside $\Omega$ and set $\Gamma_h^i=\partial\Omega_h^i$. 
	
	We now introduce the finite element spaces
	\begin{align*}
	V_h^{(k)} &= \{v_h \in H^1 (\Omega_h) : v_h |_T \in \mathbb{P}_k (T) \quad \forall
	T \in \mathcal{T}_h \},\\
	Z_h^{(k)} &= \{z_h \in H^1 (\Omega^{\Gamma}_h)^d : z_h |_T \in \mathbb{P}_k (T)^d
	\quad \forall T \in \mathcal{T}_h^{\Gamma} \},\\
	Q_h^{(k)} &= \{q_h \in L^2 (\Omega^{\Gamma}_h) : q_h |_T \in
	\mathbb{P}_{k - 1} (T) \quad \forall T \in \mathcal{T}_h^{\Gamma} \},\\
	W_h^{(k)} &= V_h^{(k)} \times Z_h^{(k)} \times Q_h^{(k)}
	\end{align*}
	and the finite element problem: Find $(u_h, y_h,p_h) \in
	W_h^{(k)}$ such that
	\begin{align}\label{Neu:Ph}
	a_h (u_h, y_h, p_h ; v_h, z_h, q_h) = &\int_{\Omega_h} fv_h + \gamma_{div}
	\int_{\Omega_h^{\Gamma}} f( \Div z_h+v_h)  ,
	\end{align}
	for all $(v_h, z_h, q_h) \in W_h^{(k)}$, where
	\begin{multline*}
	a_h (u, y, p ; v, z, q) = \int_{\Omega_h} \nabla u \cdot
	\nabla v + \int_{\Omega_h} uv + \int_{\partial\Omega_h} y \cdot nv\\
	+ \gamma_{div}  \int_{\Omega_h^{\Gamma}} (\Div y+u)( \Div z+v) 
	+ \gamma_u  \int_{\Omega_h^{\Gamma}} (y + \nabla u) \cdot (z + \nabla v) \\
	+{\frac{\gamma_p}{h^2}}  \int_{\Omega_h^{\Gamma}} (y \cdot  \nabla \phi_h 
	+ \frac{1}{h}p \phi_h)  (z \cdot \nabla \phi_h + \frac{1}{h}q \phi_h)
	+ \sigma h \int_{\Gamma_h^i} \left[ \ddn{u} \right] \left[ \ddn{v} \right] 
	\end{multline*}
	with some positive numbers $\gamma_{div}$, $\gamma_u$, $\gamma_p$, and $\sigma$
	properly chosen in a manner independent of $h$. We have assumed here that $f$ is well defined on $\Omega_h$, rather than on $\Omega$ only. 
	
	The finite element problem (\ref{Neu:Ph}) is inspired by (\ref{reform1})--(\ref{reform4}). The first line in the definition of $a_h$ comes from multiplying (\ref{reform1}) by a test function $v$, integrating by parts
	\[ \int_{\Omega_h} \nabla u \cdot \nabla v + \int_{\Omega_h} uv -
	\int_{\partial \Omega_h} \nabla u \cdot nv = \int_{\Omega_h} fv \]
	and noting that $- \nabla u \cdot n = y \cdot n$ on $\partial \Omega_h$ by (\ref{reform2}). Equations (\ref{reform2})--(\ref{reform4}) are than added in least squares manner, introducing the test functions $z$ and $q$ corresponding to $y$ and $p$ respectively. Note that we replace $p$ by $\frac{1}{h}p$ in the term stemming from (\ref{reform4}). This rescaling does not affect the discretization of $u$ (which is the only quantity that interests us) and will be crucial to control the conditioning of the method. Finally, the terms multiplied by $\sigma h$ is the Ghost penalty from \cite{burmanghost} (we need to penalize the jumps only on $\Gamma_h^i$ because some continuity of $\nabla u_h$ on the facets inside $\Omega_h^\Gamma$ is already enforced by assimilating $\nabla u_h$ to $y_h$ which is continuous).

	We now recall some technical assumptions on the domain and the mesh, the same as in \cite{nocut, phiFEM}.  These assumptions hold true for smooth domains and sufficiently refined meshes.

	\begin{assumption}\label{asm0}
		There exists a neighborhood of $\Gamma$, a domain $\Omega^\Gamma$, which can be covered by open sets $\mathcal{O}_i$, $i=1,\ldots,I$ and one can introduce on every $\mathcal{O}_i$ local coordinates $\xi_1,\ldots,\xi_d$ with $\xi_d=\phi$ such that all the partial derivatives $\partial^\alpha\xi/\partial x^\alpha$ and $\partial^\alpha x/\partial \xi^\alpha$ up to order $k+1$ are bounded by some $C_0>0$. Thus, $\phi$ is of class $C^{k+2}$ on $\Omega^\Gamma$. Moreover, $|\nabla\phi|\ge m$ on $\Omega^\Gamma$ with some $m>0$.
	\end{assumption}
	
	\begin{assumption}\label{asm1}
		$\Omega_h^\Gamma\subset\Omega^\Gamma$ and $|\nabla\phi_h|\ge \frac m 2$ on all the mesh elements of $\Omega_h^\Gamma$.
	\end{assumption}

	\begin{assumption}\label{asm2}
		The approximate boundary $\Gamma_h$ can be covered by element patch\-es $\{\Pi_k \}_{k = 1,
			\ldots, N_{\Pi}}$ having the following properties:
		\begin{itemize}
			\item Each $\Pi_k$ is composed of a mesh element $T_k$ lying inside $\Omega$ and some elements cut by $\Gamma$, more precisely $\Pi_k = T_k \cup \Pi_k^{\Gamma}$ where $T_k\in\mathcal{T}_h$, $T_k\subset\bar\Omega$, $\Pi_k^{\Gamma}\subset\mathcal{T}_h^{\Gamma}$, and $\Pi_k^{\Gamma}$ contains at most $M$ mesh elements;
			\item Each mesh element in a patch $\Pi_k$ shares at least a facet with another mesh element in the same patch. In particular, $T_k$ shares a facet $F_k$ with an element in $\Pi_k^\Gamma$;
			\item $\mathcal{T}_h^{\Gamma} = \cup_{k = 1}^{N_{\Pi}} \Pi_k^{\Gamma}$
			and $\Gamma_h^{i} = \cup_{k = 1}^{N_{\Pi}} F_k$;
			\item $\Pi_k$ and $\Pi_l$ are disjoint if $k \neq l$.
		\end{itemize}
	\end{assumption}
	Assumption \ref{asm2} prevents strong oscillations of $\Gamma$ on the length scale $h$. It can be reformulated by saying that each cut element $T\in\Th^{\Gamma}$ can be connected to an uncut element $T'\subset\Omega_h^i$ by a path consisting of a small number of mesh elements adjacent to one another;  see \cite{nocut} for a more detailed discussion and an illustration (Fig.~2).

	\begin{theorem}\label{th:error}
		Suppose that Assumptions \ref{asm0}--\ref{asm2} hold true, $l\ge k+1$, $\Omega\subset\Omega_h$ and $f \in H^k (\Omega_h)$. Let $u \in H^{k + 2} (\Omega)$  be the solution to \eqref{Neu:P} and $(u_h,y_h,p_h)\in W_h^{(k)}$ be the solution to \eqref{Neu:Ph}. Provided $\gamma_{div}$, $\gamma_u$, $\gamma_p$, $\sigma$ are sufficiently big, it holds
		\begin{align} \label{Neu:aprio}
		|u - u_h |_{1, \Omega} \le Ch^k  \|f\|_{k,\Omega_h} 
		\mbox{ ~~and~~ }
		\|u - u_h \|_{0, \Omega} \le Ch^{k+1/ 2} \|f\|_{k, \Omega_h}  
		\end{align}
		with  $C > 0$ depending on the constants in Assumptions \ref{asm0}, \ref{asm2} (and thus on the norm of $\phi$ in $C^{k+2}$), on the mesh regularity, on the polynomial degrees $k$ and $l$, and on $\Omega$, but independent of $h$, $f$, and $u$.
	\end{theorem}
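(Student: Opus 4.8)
The plan is to follow the standard paradigm for finite element error analysis — consistency plus discrete stability (coercivity) plus interpolation estimates — but adapted to the non-conforming, fictitious-domain setting. I would first establish a \emph{consistency} (Galerkin-orthogonality-type) property: although the exact solution $u$ of \eqref{Neu:P} need not satisfy the formal system \eqref{reform1}--\eqref{reform4} on $\Omega_h$, it can be extended (using Assumption \ref{asm0} and standard Sobolev extension) to a function $\tilde u\in H^{k+2}(\Omega_h)$ with $\|\tilde u\|_{k+2,\Omega_h}\le C\|f\|_{k,\Omega_h}$. Setting $\tilde y=-\nabla\tilde u$ and $\tilde p$ chosen so that \eqref{reform4} holds approximately, I would bound the \emph{residual} $a_h(\tilde u,\tilde y,\tilde p;v_h,z_h,q_h)-\ell_h(v_h,z_h,q_h)$. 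The leading term $\int_{\Omega_h}\nabla\tilde u\cdot\nabla v_h+\int_{\Omega_h}\tilde u v_h+\int_{\partial\Omega_h}\tilde y\cdot n v_h-\int_{\Omega_h}f v_h$ vanishes exactly by the integration-by-parts identity, since $-\nabla\tilde u\cdot n=\tilde y\cdot n$ on $\partial\Omega_h$ and $-\Delta\tilde u+\tilde u=f$ on $\Omega_h$ (here one needs the extension to solve the PDE on $\Omega_h$, or one absorbs the error $-\Delta\tilde u+\tilde u-f$, which is $O(h^k)$ in a suitable negative norm, into the consistency error). The least-squares terms involving $\tilde y+\nabla\tilde u$ vanish identically; the term $\Div\tilde y+\tilde u-f$ vanishes on $\Omega_h^\Gamma$ (again up to the extension error); and the $\phi_h$-term $\tilde y\cdot\nabla\phi_h+\frac1h\tilde p\,\phi_h$ is small because $\nabla\phi_h$ approximates $\nabla\phi$ to order $h^l$ (hence $\ge h^k$ since $l\ge k+1$) and $\tilde y\cdot\nabla\phi=-\partial_n\tilde u\,|\nabla\phi|$ vanishes on $\Gamma$, so on the strip $\Omega_h^\Gamma$ of width $\sim h$ it is $O(h)$ — this is precisely where $l\ge k+1$ is used, as flagged in the footnote. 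The ghost-penalty term $[\partial_n\tilde u]$ vanishes since $\tilde u$ is smooth.

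Second, I would prove a \emph{discrete inf-sup / coercivity} estimate: there is a norm
$\triple{(v_h,z_h,q_h)}^2 = |v_h|_{1,\Omega_h}^2+\|v_h\|_{0,\Omega_h}^2+\gamma_{div}\|\Div z_h+v_h\|_{0,\Omega_h^\Gamma}^2+\gamma_u\|z_h+\nabla v_h\|_{0,\Omega_h^\Gamma}^2+\frac{\gamma_p}{h^2}\|z_h\cdot\nabla\phi_h+\frac1h q_h\phi_h\|_{0,\Omega_h^\Gamma}^2+\sigma h\|[\partial_n v_h]\|_{0,\Gamma_h^i}^2$
for which $a_h(v_h,z_h,q_h;v_h,z_h,q_h)\ge \triple{(v_h,z_h,q_h)}^2 - (\text{boundary term})$. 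The only non-coercive contribution is $\int_{\partial\Omega_h}z_h\cdot n\,v_h$, and this is controlled exactly as in \cite{phiFEM,nocut}: using Assumption \ref{asm2}, the ghost penalty and the least-squares terms allow one to transfer norms from the cut strip $\Omega_h^\Gamma$ to the interior $\Omega_h^i$, so that $\|v_h\|_{\frac12,\partial\Omega_h}$-type quantities on the fictitious boundary are bounded by $\triple{\cdot}$ with a constant independent of how $\Gamma$ cuts the mesh; one then absorbs this term for $\gamma_{div},\gamma_u,\gamma_p,\sigma$ large enough. Combining consistency and coercivity gives the Céa-type bound $\triple{(u_h-I_h\tilde u,\,y_h-I_h\tilde y,\,p_h-I_h\tilde p)}\le C(\text{interpolation error}+\text{consistency error})\le Ch^k\|f\|_{k,\Omega_h}$, using standard $\mathbb{P}_k$ interpolation estimates for $\tilde u\in H^{k+2}$, $\tilde y\in H^{k+1}$. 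A triangle inequality then yields the $H^1$ estimate $|u-u_h|_{1,\Omega}\le|u-u_h|_{1,\Omega_h}\le Ch^k\|f\|_{k,\Omega_h}$.

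For the $L^2$ estimate I would use an Aubin--Nitsche duality argument. Let $e=u-u_h$ on $\Omega$ and let $\psi$ solve the adjoint problem $-\Delta\psi+\psi=e$ in $\Omega$, $\partial_n\psi=0$ on $\Gamma$, so $\|\psi\|_{2,\Omega}\le C\|e\|_{0,\Omega}$. Extending $\psi$ to $\Omega_h$ and pairing with the error, the loss of half a power of $h$ — giving $h^{k+1/2}$ rather than $h^{k+1}$ — comes from the geometric mismatch terms supported on the strip $\Omega_h^\Gamma$ of measure $O(h)$, together with the fact that the auxiliary variables $(y_h,p_h)$ are only measured in the $\triple{\cdot}$-norm which does not see the extra power; estimating these residual terms by $\|e\|\times h^{1/2}\times(\text{consistency/interpolation factors})$ and using the $H^1$ bound already obtained yields $\|e\|_{0,\Omega}\le Ch^{k+1/2}\|f\|_{k,\Omega_h}$. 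The main obstacle, and the technically heaviest step, is the discrete coercivity: making rigorous the transfer of control from the cut strip to the interior uniformly in the mesh/boundary configuration, i.e. establishing the inverse-type and trace inequalities on patches from Assumption \ref{asm2} with constants independent of the cuts, and carefully tracking the dependence on $\gamma_{div},\gamma_u,\gamma_p,\sigma$ so that the boundary term $\int_{\partial\Omega_h}z_h\cdot n\,v_h$ can indeed be absorbed.
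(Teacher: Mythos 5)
Your overall architecture (extension of $u$ to $\Omega_h$, a Galerkin-orthogonality/consistency argument, patch-based coercivity in a mesh-dependent triple norm, a C\'ea-type bound plus interpolation, and an Aubin--Nitsche duality with an $h^{1/2}$ loss on the strip) is exactly the paper's, and your treatment of the boundary term in the coercivity step and of the extension residual $-\Delta\tilde u+\tilde u-f$ is in the right spirit. However, there is a genuine gap in your consistency analysis of the penalization term $\tilde y\cdot\nabla\phi_h+\frac1h\tilde p\,\phi_h$. You leave $\tilde p$ ``chosen so that \eqref{reform4} holds approximately'' and then argue that the term is $O(h)$ on the strip because $\tilde y\cdot\nabla\phi$ vanishes on $\Gamma$. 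This is not nearly enough: the term enters $a_h$ with the weight $\gamma_p/h^2$, so the consistency residual it generates is $\frac1h\bigl\|\tilde y\cdot\nabla\phi_h+\frac1h\tilde p\,\phi_h\bigr\|_{0,\Omega_h^\Gamma}$ times the triple norm of the test functions; with only a pointwise $O(h)$ bound and $|\Omega_h^\Gamma|=O(h)$ this gives $O(h^{1/2})$, which would destroy the $O(h^k)$ rate for every $k\ge 1$.

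The paper's resolution is to set $p:=-\frac{h}{\phi}\,y\cdot\nabla\phi$ so that $y\cdot\nabla\phi+\frac1h p\phi=0$ holds \emph{exactly} on $\Omega_h^\Gamma$; the residual then collapses to $y\cdot\nabla(\phi_h-\phi)+\frac1h p(\phi_h-\phi)$, which is controlled purely by the level-set interpolation error ($\|\nabla(\phi_h-\phi)\|_\infty\le Ch^{k+1}$ and $\|\phi_h-\phi\|_\infty\le Ch^{k+2}$ when $l\ge k+1$ and $\phi\in C^{k+2}$), giving $O(h^k)$ after the division by $h$. Making this work requires the Hardy-type inequality of Lemma~\ref{lemma:hardy} to show that $p$ actually belongs to $H^k(\Omega_h^\Gamma)$ with $\|p\|_{k,\Omega_h^\Gamma}\le Ch\|y\|_{k+1,\Omega_h^\Gamma}$ --- nontrivial since $\phi$ vanishes on $\Gamma\subset\Omega_h^\Gamma$ --- and this regularity is also needed for the interpolation estimate of $p-I_hp$ in the C\'ea step (with a Cl\'ement interpolant when $k=1$). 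Your sketch correctly identifies where $l\ge k+1$ enters but, without the exact cancellation through $p$ and the Hardy lemma, it does not deliver the claimed rate.
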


	\begin{remark}[(Condition $\Omega\subset\Omega_h$)]
		The assumptions of Theorem \ref{th:error} include $\Omega\subset\Omega_h$. Note that one would automatically have $\Omega\subset\Omega_h$, were $\Omega_h$ defined as the set of mesh cells having a non empty intersection with $\Omega=\{\phi<0\}$. However, $\Omega_h$ is based on the intersections with $\{\phi_h<0\}$ which can result in some rare situation where tiny portions of $\Omega$ lie outside $\Omega_h$. In such a case, the \textit{a priori} estimates (\ref{Neu:aprio})
		will control the error only on $\Omega\cap\Omega_h$.
	\end{remark}


		\begin{remark}[(non-homogeneous Neumann and Robin conditions)]\label{nonhom}
			We can also treat the case of more general boundary conditions:
			\begin{enumerate}
			\item[(i)] non-homogeneous Neumann boundary conditions $\frac{\partial u}{\partial n}=g$ on $\Gamma$ by adding the term 
			$$ - {\frac{\gamma_p}{h^2}}
			\int_{\Omega_h^{\Gamma}} \tilde{g} | \nabla \phi_h |  (z_h \cdot \nabla
			\phi_h + \frac{1}{h}q_h \phi_h) $$
			in the right-hand side  of \eqref{Neu:Ph} where $\tilde{g}\in H^{k+1}(\Omega_h^{\Gamma})$ is lifting of $g$ from $\Gamma$ to a vicinity of $\Gamma$.
			\item[(ii)] Robin boundary condition $\frac{\partial u}{\partial n}+\alpha u=g$ on $\Gamma$ ($\alpha\in\mathbb{R}$) by replacing the penultimate term in $a_h$ by
			\begin{equation*}
		{\frac{\gamma_p}{h^2}}  \int_{\Omega_h^{\Gamma}} (y \cdot  \nabla \phi_h -| \nabla \phi_h |\alpha u + \frac{1}{h}p \phi_h)  (z \cdot \nabla \phi_h -| \nabla \phi_h |\alpha v + \frac{1}{h}q \phi_h)
	\end{equation*}
		and by adding the term
		$$ - {\frac{\gamma_p}{h^2}}
			\int_{\Omega_h^{\Gamma}} \tilde{g} | \nabla \phi_h |  (z_h \cdot \nabla
			\phi_h -| \nabla \phi_h | \alpha v+\frac{1}{h}q_h \phi_h) $$		
	in the right-hand side  of \eqref{Neu:Ph} where $\tilde{g}\in H^{k+1}(\Omega_h^{\Gamma})$ is defined as before.			
			\end{enumerate}			
			 Theorem \ref{th:error} remains valid, adding  $\| \tilde{g} \|_{k
				+ 1, \Omega_h^{\Gamma}}$ to $\|f\|_{k,\Omega_h}$ in \eqref{Neu:aprio}.
				This framework will be used in first test case of the numerical simulations performed in Section \ref{sec:num}: Fig. 2-8 for (i) and Fig. 9 for (ii).
	\end{remark}

\section{Proof of the \textit{a priori} error estimates}

From now on, we shall use the letter $C$ for positive constants (which can vary from one line to another) that depend only on the regularity of the mesh and on  the constants in Assumptions \ref{asm0}--\ref{asm2}.

We shall begin with some technical results, mostly adapted from \cite{nocut} and \cite{phiFEM} to be used later in the proofs of the coercivity of $a_h$ (Section 3.2) and the \textit{a priori} error estimates (Sections 3.3 and 3.4).  

\subsection{Technical lemmas}

%

We recall first a lemma from \cite{phiFEM}:

\begin{lemma}
  \label{lemma:poly}Let $T$ be a triangle/tetrahedron, $E$ one of its sides
  and $p$ a polynomial on $T$ such that $p = a$ on $E$ for some $a \in \mathbb{R}$,
  $\frac{\partial p}{\partial n} = 0$ on $E$, and $\Delta p  = 0$
  on $T$. Then $p = a$ on  $T$.
\end{lemma}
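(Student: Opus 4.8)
The plan is to reduce the statement to an elementary fact about harmonic polynomials, using the factor theorem together with a recursion on the Taylor coefficients in the normal direction. First I would normalize: replacing $p$ by $p-a$, one may assume $a=0$, so that $p$ is harmonic on $T$, vanishes on $E$, and has vanishing normal derivative on $E$. Since $\Delta p$ is a polynomial that vanishes on the nonempty open set $T$, the identity $\Delta p=0$ holds on all of $\mathbb{R}^d$. I then pick Cartesian coordinates $x=(x',x_d)$, $x'=(x_1,\dots,x_{d-1})$, obtained by a rigid motion, so that the affine hull of $E$ is the hyperplane $\{x_d=0\}$ and the (constant) outward normal of $T$ along $E$ is $\pm e_d$; hence $\partial p/\partial n = \pm\,\partial p/\partial x_d$ on $E$. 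Note that $E$, being a genuine facet of the $d$-simplex $T$, has positive $(d-1)$-dimensional Lebesgue measure inside $\{x_d=0\}$.

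Next I would show that $x_d^2$ divides $p$. The map $x'\mapsto p(x',0)$ is a polynomial in $d-1$ variables vanishing on the positive-measure set $E\subset\mathbb{R}^{d-1}$, hence it vanishes identically, and by the factor theorem $p=x_d\,q$ for some polynomial $q$. Differentiating, $\partial p/\partial x_d = q + x_d\,\partial q/\partial x_d$, which restricted to $\{x_d=0\}$ equals $q(x',0)$; since this vanishes on $E$, the same argument yields $q=x_d\,r$, so $p=x_d^2\,r$.

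Finally I would bring in harmonicity. Writing $p=\sum_{j\ge 0} a_j(x')\,x_d^{\,j}$ (a finite sum) with $a_0=a_1=0$ by the previous step, and setting $\Delta'=\sum_{i=1}^{d-1}\partial^2/\partial x_i^2$, the identity $\Delta p=0$, read coefficient by coefficient in powers of $x_d$, becomes $(j+1)(j+2)\,a_{j+2}=-\Delta' a_j$ for every $j\ge 0$. Starting from $a_0=a_1=0$, an induction forces $a_j\equiv 0$ for all $j$, i.e. $p\equiv 0$; reinstating the constant gives $p=a$ on $T$.

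I do not anticipate a genuine obstacle here: once the coordinates are fixed the argument is purely algebraic. The only points requiring a little care are the implication ``a polynomial vanishing on a facet is divisible by the defining linear form'' (which rests on a facet having positive codimension-one measure) and the bookkeeping identifying $\partial/\partial n$ on $E$ with $\pm\,\partial/\partial x_d$ in the chosen frame.
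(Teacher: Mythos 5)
Your proof is correct and complete: the reduction to $a=0$, the divisibility of $p$ by $x_d^2$ obtained from the two boundary conditions on the positive-measure facet $E$, and the recursion $(j+1)(j+2)a_{j+2}=-\Delta' a_j$ forced by harmonicity together give $p\equiv 0$. Note that the present paper does not reprove this lemma but merely recalls it from the earlier $\phi$-FEM reference; your argument is the standard Cauchy--Kovalevskaya-type coefficient recursion used there, so there is nothing to flag.
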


We now adapt a lemma from \cite{nocut}:

\begin{lemma}
Let $B_h$ be the strip  between $\partial\Omega_h $ and $\Gamma_h$.   For any $\beta > 0$, there exist $0 < \alpha < 1$ and $\delta > 0$ depending only on the mesh regularity and geometrical  assumptions such that, for all $v_h \in V_h^{(k)}, z_h \in Z_h^{(k)}$
  \begin{equation}
    \left| \int_{B_h} z_h \cdot \nabla v_h \right| \leq
    \alpha |v_h |_{1, \Omega_h}^2 + \delta \|z_h + \nabla v_h \|^2_{0,
    \Omega_h^{\Gamma}} + \beta h \left\| \left[ \ddn{v_h} \right] \right\|_{0,
    \Gamma_h^i}^2 
    + \beta h^2 \| \Div z_h +v_h \|_{0,  \Omega_h^{\Gamma}}^2
    + \beta h^2 \| v_h  \|_{0,  \Omega_h^{\Gamma}}^2 .
    \label{Neu:prop1}
  \end{equation}
\end{lemma}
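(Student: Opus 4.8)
The plan is to reduce everything to a bound on $\|\nabla v_h\|_{0,B_h}^2$ and then to control the latter by the interior energy $|v_h|_{1,\Omega_h}^2$ (with a factor strictly below $1$) plus the admissible remainders. First I would use the pointwise identity $z_h\cdot\nabla v_h=(z_h+\nabla v_h)\cdot\nabla v_h-|\nabla v_h|^2$: integrating over $B_h$, estimating the mixed term by Cauchy--Schwarz and Young with a parameter $\varepsilon$, and recalling $B_h\subset\Omega_h^{\Gamma}$, one gets
\[
\left|\int_{B_h}z_h\cdot\nabla v_h\right|\le\frac{1}{4\varepsilon}\,\|z_h+\nabla v_h\|_{0,\Omega_h^{\Gamma}}^2+(1+\varepsilon)\,\|\nabla v_h\|_{0,B_h}^2 .
\]
Since $\|\nabla v_h\|_{0,B_h}\le\|\nabla v_h\|_{0,\Omega_h^{\Gamma}}$, it suffices to estimate $\|\nabla v_h\|_{0,\Omega_h^{\Gamma}}^2$ by $\theta\,|v_h|_{1,\Omega_h}^2$ with $\theta<1$, plus terms of the required type with $\beta$-proportional coefficients, and to take $\varepsilon$ small at the very end.

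The core is a one-element transfer estimate. Given a cut element $T\in\Th^{\Gamma}$ sharing a facet $F$ with a neighbour $T'$, I would bound $\|\nabla v_h\|_{0,T}$ in terms of $\|\nabla v_h\|_{0,T'}$. Since $v_h\in H^1$, the tangential gradient of $v_h$ is continuous across $F$ and only the normal derivative jumps by $[\partial_n v_h]_F$; moreover $\nabla v_h$ is curl-free, so $v_h|_T\in\mathbb{P}_k(T)$ is, modulo an additive constant, determined by its tangential gradient on $F$, its normal derivative on $F$, and $\Delta v_h$ on $T$. Lemma~\ref{lemma:poly} is exactly the statement that this map has trivial kernel on $\mathbb{P}_k(T)$, so by equivalence of norms on this fixed finite-dimensional space together with a scaling argument, $\|\nabla v_h\|_{0,T}\le C\big(h^{1/2}\|\nabla_F v_h\|_{0,F}+h^{1/2}\|\partial_n v_h\|_{0,F}+h\|\Delta v_h\|_{0,T}\big)$. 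Replacing the facet traces by the $T'$-side values plus the jump (inverse trace inequality), and writing $\Delta v_h=\Div(\nabla v_h+z_h)-(\Div z_h+v_h)+v_h$ while using the inverse estimate $\|\Div(\nabla v_h+z_h)\|_{0,T}\le Ch^{-1}\|\nabla v_h+z_h\|_{0,T}$, this yields
\[
\|\nabla v_h\|_{0,T}^2\le C\Big(\|\nabla v_h\|_{0,T'}^2+h\|[\partial_n v_h]\|_{0,F}^2+\|\nabla v_h+z_h\|_{0,T}^2+h^2\|\Div z_h+v_h\|_{0,T}^2+h^2\|v_h\|_{0,T}^2\Big).
\]

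Next I would globalise via the patch structure of Assumption~\ref{asm2}, iterating the transfer estimate along the chain of at most $M+1$ elements joining an arbitrary $T\in\Pi_k^{\Gamma}$ to the interior element $T_k$. For an intermediate facet $F$, which lies inside $\Omega_h^{\Gamma}$, continuity of $z_h$ gives $[\partial_n v_h]_F=[(\nabla v_h+z_h)\cdot n]_F$, so an inverse trace inequality turns its jump term into a contribution to $\|\nabla v_h+z_h\|_{0,\Omega_h^{\Gamma}}^2$; the only facet producing a genuine jump contribution is $F_k\in\Gamma_h^i$. Summing over the disjoint patches and using $\Gamma_h^i=\cup_k F_k$ and $\Th^{\Gamma}=\cup_k\Pi_k^{\Gamma}$ gives the crux inequality
\[
\|\nabla v_h\|_{0,\Omega_h^{\Gamma}}^2\le C\Big(|v_h|_{1,\Omega_h^i}^2+h\|[\partial_n v_h]\|_{0,\Gamma_h^i}^2+\|\nabla v_h+z_h\|_{0,\Omega_h^{\Gamma}}^2+h^2\|\Div z_h+v_h\|_{0,\Omega_h^{\Gamma}}^2+h^2\|v_h\|_{0,\Omega_h^{\Gamma}}^2\Big).
\]
To get a factor $<1$ in front of $|v_h|_{1,\Omega_h}^2$ and arbitrarily small coefficients elsewhere, I would convex-combine this with the trivial bound $\|\nabla v_h\|_{0,\Omega_h^{\Gamma}}^2\le|v_h|_{1,\Omega_h}^2$: multiplying the crux inequality by $t\in(0,1)$ and the trivial one by $1-t$, substituting $|v_h|_{1,\Omega_h^i}^2=|v_h|_{1,\Omega_h}^2-\|\nabla v_h\|_{0,\Omega_h^{\Gamma}}^2$, and absorbing the term $tC\|\nabla v_h\|_{0,\Omega_h^{\Gamma}}^2$ on the left produces the coefficient $\frac{1-t+tC}{1+tC}<1$ in front of $|v_h|_{1,\Omega_h}^2$ and $\frac{tC}{1+tC}\to0$ (as $t\to0$) in front of each remaining term. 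Choosing $t$ small enough, and then $\varepsilon$ small in the first step, gives the statement with $\alpha<1$, $\delta$ collecting $\tfrac1{4\varepsilon}$ and the $\|z_h+\nabla v_h\|^2$ contributions, and the jump and $L^2$-terms carrying any prescribed factor $\beta$.

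The main obstacle is the one-element transfer estimate: one has to invoke Lemma~\ref{lemma:poly} with the correct powers of $h$, exploit that $\nabla v_h$ is a gradient so that only the Cauchy data of the scalar $v_h$ on $F$ enter (together with $\Delta v_h$, itself controlled by the inverse estimate in terms of $\nabla v_h+z_h$, $\Div z_h+v_h$ and $v_h$), and keep the constant accumulated along the chain finite and dependent only on the mesh regularity and the constants of Assumptions~\ref{asm0}--\ref{asm2}; it is then turned into $\alpha<1$ automatically by the absorption step. Everything else is routine Cauchy--Schwarz/Young and inverse-inequality bookkeeping.
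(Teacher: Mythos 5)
Your argument is correct and reaches the stated estimate, but by a genuinely different route from the paper. The paper proves the inequality in one shot by a global compactness argument on patches: it defines $\alpha$ as the supremum, over all admissible patch configurations rescaled to $h=1$ and all $(v_h,z_h)$, of a ratio functional built from $\|z_h\|_{0,\Pi_k^{\Gamma}}|v_h|_{1,\Pi_k^{\Gamma}}$ minus the $\beta$-weighted residual terms over $\tfrac12\|z_h\|_{0,\Pi_k^{\Gamma}}^2+\tfrac12|v_h|_{1,\Pi_k}^2$; it shows the supremum is attained and is strictly less than $1$ by a contradiction argument in which Lemma~\ref{lemma:poly} forces $v_h$ to be constant on the whole patch, and then sums over patches, so the strict inequality $\alpha<1$ comes directly from the compactness step. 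You instead start from the identity $z_h\cdot\nabla v_h=(z_h+\nabla v_h)\cdot\nabla v_h-|\nabla v_h|^2$, reduce everything to bounding $\|\nabla v_h\|_{0,\Omega_h^{\Gamma}}^2$, prove a local one-element transfer estimate whose injectivity (kernel) statement is again exactly Lemma~\ref{lemma:poly}, chain it through the patches of Assumption~\ref{asm2}, and recover $\alpha<1$ a posteriori by the convex-combination/absorption trick. Both proofs thus rest on the same two ingredients, Lemma~\ref{lemma:poly} and finite-dimensional norm equivalence under shape regularity; note that your single-element constant still requires a compactness (or continuity-of-constants) argument over admissible element and facet shapes, so compactness is localized rather than eliminated. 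Your version is more constructive and modular, making the propagation of constants explicit in the style of standard ghost-penalty analyses, at the price of the extra absorption step; the paper's version is shorter but defines $\alpha$ only implicitly as an abstract maximum. Two small points to spell out in a write-up: when summing the chained estimates over a patch, chains emanating from different cut elements may revisit the same intermediate elements and facets, so the multiplicity must be bounded by the patch cardinality $M$ of Assumption~\ref{asm2}; and the interior facets reached by the chains should be identified with (subsets of) $\Gamma_h^i=\partial\Omega_h^i$ so that the surviving jump terms are indeed controlled by $h\left\|\left[\ddn{v_h}\right]\right\|_{0,\Gamma_h^i}^2$.
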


\begin{proof}
  The boundary $\Gamma$ can be covered by element patches $\{\Pi_k \}_{k = 1,
  \ldots, N_{\Pi}}$ as in Assumption \ref{asm2}. Choose any $\beta > 0$ and
  consider
\begin{equation}
  \label{Neu:alph} \alpha \assign \max_{\Pi_k, (z_h, v_h) \neq (0, 0)} F
  (\Pi_k, z_h, v_h)
\end{equation}
with
\begin{equation*}
 F (\Pi_k, z_h, v_h) = \frac{\|z_h \|_{0, \Pi_k^{\Gamma}} |v_h |_{1,
   \Pi_k^{\Gamma}} - \beta \|z_h + \nabla v_h \|^2_{0, \Pi_k^{\Gamma}} - \beta
   h \left\| \left[ \ddn{v_h} \right] \right\|^2_{0, F_k} - \frac \beta 2 h^2  \| \Div z_h  \|^2_{0,
   \Pi_k^{\Gamma}}}{\frac{1}{2} \|z_h \|_{0, \Pi_k^{\Gamma}}^2 + \frac{1}{2}
   |v_h |_{1, \Pi_k}^2}, 
   \end{equation*}
where the maximum is taken over all the possible configurations of a patch
$\Pi_k$ allowed by the mesh regularity and over all $v_h \in V_h^{(k)}$ and
$z_h \in Z_h^{(k)}$ restricted to $\Pi_k$.
Note that $F (\Pi_k, z_h, v_h)$ is invariant under the
scaling transformation $x \mapsto \frac{1}{h} x$, $v_h \mapsto \frac{1}{h}v_h$, $z_h \mapsto z_h$. 
We can thus assume $h = 1$ when computing the maximum in (\ref{Neu:alph}).
Moreover, $F (\Pi_k, z_h, v_h)$ is homogeneous with respect to $v_h$, $z_h$, i.e.
$F (\Pi_k, z_h, v_h) = F (\Pi_k, \mu z_h, \mu v_h) $ for any $\mu \neq 0$. Thus, the maximum in (\ref{Neu:alph}) is indeed attained since it can be taken over a closed
bounded set in a finite dimensional space (all the admissible patches on a
mesh with $h = 1$ and all $v_h, z_h$ such that $|v_h |_{1, \Pi_k}^2 + \|z_h
\|_{0, \Pi_k^{\Gamma}}^2 = 1$).

  Clearly, $\alpha \leq 1$. Supposing $\alpha = 1$ leads to a
  contradiction. Indeed, if $\alpha = 1$, we can then take $\Pi_k$, $v_h$,
  $z_h$ yielding this maximum (in particular, $|v_h |_{1, \Pi_k}^2 + \|z_h
  \|_{0, \Pi_k^{\Gamma}}^2 > 0$). We observe then
  \begin{equation*}
   \frac{1}{2} |v_h |_{1, \Pi_k}^2 - \|z_h \|_{0, \Pi_k^{\Gamma}} |v_h |_{1,
     \Pi_k^{\Gamma}} + \frac{1}{2} \|z_h \|_{0, \Pi_k^{\Gamma}}^2 + \beta
     \|z_h + \nabla v_h \|^2_{0, \Pi_k^{\Gamma}} + \beta h \left\| \left[
     \ddn{v_h} \right] \right\|_{0, F_k}^2 + \frac \beta 2 h^2 \| \Div z_h \|^2_{0,
     \Pi_k^{\Gamma}} = 0
     \end{equation*}
  and consequently (recall $|v_h |_{1, \Pi_k}^2 = |v_h |_{1, T_k}^2 + |v_h
  |_{1, \Pi_k^{\Gamma}}^2$)
\begin{equation}
  \label{Neu:alph1} \frac{1}{2} |v_h |_{1, T_k}^2  + \beta \|z_h + \nabla v_h
  \|^2_{0, \Pi_k^{\Gamma}} + \beta h \left\| \left[ \ddn{v_h} \right]
  \right\|_{0, F_k}^2 + \frac \beta 2 h^2  \|
  \Div z_h  \|^2_{0, \Pi_k^{\Gamma}} = 0.
\end{equation}
This implies
$|v_h |_{1, T_k}^{}=0$ so that $v_h =\text{const}$ on $T_k$. Moreover, $\|z_h + \nabla v_h \|_{0, \Pi_k^{\Gamma}}=0$ so that $\nabla v_h = -
  z_h$ on $\Pi_k^{\Gamma}$, hence $\nabla v_h$ is continuous on
  $\Pi_k^{\Gamma}$ and $\Delta v_h=0$ on  $\Pi_k^{\Gamma}$ since $\Div z_h  = 0$  there. The jump $\left[ \ddn{v_h} \right]$ vanishes also on the facet $F_k$ separating $T_k$ from $\Pi_k^{\Gamma}$, as implied directly by  (\ref{Neu:alph1}). Combining these observations with Lemma \ref{lemma:poly}, starting from $T_k$ and its neighbor in $\Pi^{\Gamma}_k$ and then propagating to other elements of $\Pi^{\Gamma}_k$, we see that $v_h =\text{const}$ on the whole $\Pi_k$. We have thus $\nabla v_h = 0$ on $\Pi_k$ and $z_h = 0$ on $\Pi_k^{\Gamma}$, which is in contradiction with $|v_h |_{1, \Pi_k}^2 + \|z_h \|_{0, \Pi_k^{\Gamma}}^2 > 0$.

  Thus $\alpha < 1$ and
  \begin{equation*}
    \|z_h \|_{0, \Pi_k^{\Gamma}} |v_h |_{1, \Pi_k^{\Gamma}} \leq
    \frac{\alpha}{2} \|z_h \|_{0, \Pi_k^{\Gamma}}^2 + \frac{\alpha}{2} |v_h
    |_{1, \Pi_k}^2 + \beta \|z_h + \nabla v_h \|^2_{0, \Pi_k^{\Gamma}} + \beta
    h \left\| \left[ \ddn{v_h} \right] \right\|^2_{0, \partial T_k \cap \partial
    \Pi_k^{\Gamma}} + \frac \beta 2 h^2 \| \Div z_h \|_{0,
    \Pi_k^{\Gamma}}^2
  \end{equation*}
  for all $v_h, z_h$ and all admissible patches $\Pi_k$. We now observe
  \begin{multline*}
    \left| \int_{B_h} z_h \cdot \nabla v_h \right|  \leq
    \sum_k \left| \int_{B_h \cap \Pi_k^{\Gamma}} z_h \cdot \nabla v_h \right|
    \leq \sum_k \|z_h \|_{0, \Pi_k^{\Gamma}} |v_h |_{1, \Pi_k^{\Gamma}}\\
    \leq \frac{\alpha}{2} \|z_h \|_{0, \Omega_h^{\Gamma}}^2 +
    \frac{\alpha}{2} |v_h |_{1, \Omega_h}^2 + \beta \|z_h + \nabla v_h
    \|^2_{0, \Omega_h^{\Gamma}} + \beta h \left\| \left[ \ddn{v_h} \right]
    \right\|^2_{0, \Gamma_h^i}+ \frac \beta 2 h^2 \| \Div z_h \|_{0, \Omega_h^{\Gamma}}^2.
  \end{multline*}
  We now use  the Young inequality with any $\varepsilon > 0$ to obtain
  \[ \|z_h \|_{0, \Omega_h^{\Gamma}}^2 = \|z_h + \nabla v_h \|_{0,
     \Omega_h^{\Gamma}}^2 + \| \nabla v_h \|_{0, \Omega_h^{\Gamma}}^2 - 2 (z_h
     + \nabla v_h, \nabla v_h)_{0, \Omega_h^{\Gamma}} 
   \leq \left( 1 + \frac{1}{\varepsilon} \right)  \|z_h + \nabla v_h
     \|_{0, \Omega_h^{\Gamma}}^2 + (1 + \varepsilon) |v_h |_{1, \Omega_h}^2, \]
  which leads to
  \begin{equation*}\left| \int_{B_h} z_h \cdot \nabla v_h \right|
   \leq \alpha \left( 1 + \frac{\varepsilon}{2} \right) |v_h |_{1,
    \Omega_h}^2 + \left( \beta + \frac{\alpha}{2} + \frac{\alpha}{2
    \varepsilon} \right)  \|z_h + \nabla v_h \|^2_{0, \Omega_h^{\Gamma}} +
    \beta h \left\| \left[ \ddn{v_h} \right] \right\|^2_{0, \Gamma_h^i} + \beta
    h^2 \| \Div z_h  \|_{0, \Omega_h^{\Gamma}}^2.
  \end{equation*}
  Taking $\varepsilon$ sufficiently small, redefining $\alpha$ as $\alpha
  \left( 1 + \frac{\varepsilon}{2} \right)$ and putting $\delta = \left( \beta
  + \frac{\alpha}{2} + \frac{\alpha}{2 \varepsilon} \right)$ we obtain
  $$
      \left| \int_{B_h} z_h \cdot \nabla v_h \right| \leq
    \alpha |v_h |_{1, \Omega_h}^2 + \delta \|z_h + \nabla v_h \|^2_{0,
    \Omega_h^{\Gamma}} + \beta h \left\| \left[ \ddn{v_h} \right] \right\|_{0,
    \Gamma_h^i}^2 
    +  \beta  h^2 \| \Div z_h \|_{0,  \Omega_h^{\Gamma}}^2 .
  $$
  This leads to (\ref{Neu:prop1}) by the triangle inequality
  $ \| \Div z_h \|_{0,  \Omega_h^{\Gamma}} \le \| \Div z_h +v_h \|_{0,\Omega_h^{\Gamma}} 
    + \| v_h  \|_{0,\Omega_h^{\Gamma}} $.
\end{proof}

\begin{lemma}\label{Prelim1}
  For all $v \in H^1 (\Omega_h^{\Gamma})$,
 $
    \|v\|_{0, \Omega_h^{\Gamma}} \le C \left( \sqrt{h} \|v\|_{0, \Gamma_h^i} +
    h|v|_{1, \Omega_h^{\Gamma}} \right)
$\\
and for all $v \in H^1 (\Omega_h\backslash \Omega)$,
  $
    \|v\|_{0, \Omega_h\backslash \Omega} \le C \left( \sqrt{h} \|v\|_{0, \Gamma} +
    h|v|_{1, \Omega_h\backslash \Omega} \right).
$
\end{lemma}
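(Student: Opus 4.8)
The two inequalities are of the same nature—an $L^2$ bound on a tubular strip in terms of an $L^2$ trace on one of its bounding surfaces plus an $h$-weighted $H^1$ seminorm—so I would prove a single local estimate on a ``slice'' of the strip and then sum. The plan is as follows. For the first inequality, recall (Assumption~\ref{asm2}) that $\Omega_h^\Gamma$ decomposes into the patches $\Pi_k^\Gamma$, each a bounded number $\le M$ of cut cells, and each $\Pi_k^\Gamma$ shares a facet $F_k\subset\Gamma_h^i$ with an interior cell. On a reference patch of unit size, the map $v\mapsto \|v\|_{0,\Pi_k^\Gamma}$ is controlled by $\|v\|_{0,F_k}+|v|_{1,\Pi_k^\Gamma}$: indeed the only functions in $H^1$ with zero trace on $F_k$ and zero gradient on the (connected, by the patch structure) $\Pi_k^\Gamma$ are the zero function, so a compactness/Peetre–Tartar argument on the fixed finite family of admissible reference patches gives a uniform constant. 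Scaling $x\mapsto hx$ then inserts the powers $\sqrt h$ in front of the trace term (a $d-1$-dimensional measure) and $h$ in front of the seminorm, yielding $\|v\|_{0,\Pi_k^\Gamma}^2\le C(h\|v\|_{0,F_k}^2+h^2|v|_{1,\Pi_k^\Gamma}^2)$. Summing over $k$, using that the patches are disjoint and that $\Gamma_h^i=\cup_k F_k$, gives the first claim.

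For the second inequality, the domain in question is the thin region $\Omega_h\setminus\Omega$ lying between $\Gamma=\{\phi=0\}$ and $\partial\Omega_h$, of width $O(h)$ by Assumption~\ref{asm1} (this region sits inside $\Omega^\Gamma$, where $\phi$ is smooth with $|\nabla\phi|\ge m$). Here there is no mesh to exploit directly, so instead I would use the local coordinates of Assumption~\ref{asm0}, in which $\xi_d=\phi$ and $\Gamma=\{\xi_d=0\}$. In these coordinates $\Omega_h\setminus\Omega$ is, in each chart $\mathcal O_i$, contained in a slab $\{0<\xi_d<c\,h\}$ (the bound on the width coming from $|\nabla\phi|\ge m$ and $\phi_h$ being $O(h^{l+1})$-close, but in any case $\phi_h$ being a degree-$l$ interpolant on cells of size $h$ forces $\partial\Omega_h$ to stay within $O(h)$ of $\Gamma$). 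Writing $v(\xi',\xi_d)=v(\xi',0)+\int_0^{\xi_d}\partial_t v\,dt$, squaring, integrating in $\xi_d$ over $(0,ch)$ and then in $\xi'$, and undoing the coordinate change (whose Jacobian and its inverse are bounded by Assumption~\ref{asm0}) gives $\|v\|_{0,\Omega_h\setminus\Omega}^2\le C(h\|v\|_{0,\Gamma}^2+h^2|v|_{1,\Omega_h\setminus\Omega}^2)$, i.e.\ the second claim. This is exactly the one-dimensional trace/Poincaré inequality on an interval of length $\sim h$, transported through the coordinate charts and patched with a partition of unity subordinate to the $\mathcal O_i$.

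The routine parts are the scaling bookkeeping and the partition-of-unity patching. The one genuinely delicate point is justifying, for the second inequality, that the outer boundary $\partial\Omega_h$ really does lie within an $O(h)$-neighbourhood of $\Gamma$ on the $\Gamma$-side, so that the slab $\{0<\xi_d<ch\}$ contains $\Omega_h\setminus\Omega$ with a constant $c$ independent of the mesh cuts; this is where one uses that $\Omega_h$ is a union of mesh cells of diameter $\le h$ each meeting $\{\phi_h<0\}$, together with Assumption~\ref{asm1} on $|\nabla\phi_h|$ and the closeness of $\phi_h$ to $\phi$. For the first inequality the analogous delicate point—that each cut patch $\Pi_k^\Gamma$ is connected and abuts an interior facet—is handed to us directly by Assumption~\ref{asm2}, so the compactness argument there is clean. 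Both estimates, and indeed this whole lemma, are minor adaptations of the corresponding results in \cite{nocut}, so I would keep the write-up terse and refer there for the details of the reference-patch compactness step.
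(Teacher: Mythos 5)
The paper does not actually prove this lemma: it cites \cite{nocut} for the first inequality and asserts that the second ``can be treated similarly''. Your sketch is therefore being compared against a citation rather than a written argument, and as a reconstruction of the standard proof it is essentially correct: patchwise Poincar\'e-with-trace on a reference configuration plus scaling for the first inequality, and a one-dimensional trace inequality in the level-set coordinate over an $O(h)$-thick slab for the second. Two points deserve more care than your write-up gives them. First, the family of admissible reference patches is not a \emph{finite} family but a compact one (the simplex shapes vary continuously under the regularity constraint $\rho(T)\ge\beta h$), and for general $H^1$ functions one cannot simply ``take a max over finitely many cases''; the uniform constant comes either from a compactness argument over this family or, more simply, from transporting the inequality from one fixed reference patch through uniformly bi-Lipschitz maps. (Note also that the argument needs $\Pi_k^\Gamma$ itself to be facet-connected and attached to $T_k$ through $F_k$, which is the intended reading of Assumption~\ref{asm2}.) Second, in the proof of the second inequality, writing $v(\xi',\xi_d)=v(\xi',0)+\int_0^{\xi_d}\partial_t v\,dt$ requires the whole transversal segment to lie in the set where $v$ is defined; since $\Omega_h\setminus\Omega$ is a union of pieces of mesh cells, it is not automatic that these segments stay inside it, so one should either first establish the inequality on the full slab $\Omega^\Gamma\cap\{0\le\phi\le Ch\}$ (and apply it to functions defined there, as is the case for every application in the paper) or add an extension step. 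You correctly identify the $O(h)$-width of $\Omega_h\setminus\Omega$ as the main geometric input, but this second, ``star-shapedness'' issue is the one your sketch silently assumes away. Neither point invalidates the approach; both are of the kind the paper's own one-line proof also leaves implicit.
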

We refer to \cite{nocut} for the first inequality. The second one can be treated similarly. 

The following lemma is borrowed from \cite{phiFEM}. It's a partial generalization of Lemma \ref{Prelim1} to derivatives of higher order.  
\begin{lemma}\label{Prelim15}
Under Assumption \ref{asm0}, it  holds for all $v \in H^{s} (\Omega_h)$ with integer $1\le s\le k+1$,
$v$ vanishing on  $\Omega$,
$
\left\|v \right\|_{0,\Omega_h\setminus\Omega}
\le  Ch^s\left\|v\right\|_{s, \Omega_h\setminus\Omega}.
$ 
\end{lemma}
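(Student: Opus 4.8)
The plan is to localise near $\Gamma$, flatten the boundary with the coordinates furnished by Assumption \ref{asm0}, and then gain one power of $h$ per derivative by a one-dimensional Taylor expansion in the direction transverse to $\Gamma$.

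First I would record the geometric fact that, since $v\equiv 0$ on $\Omega$, the only region in which $v$ can be nonzero, namely $\Omega_h\setminus\Omega$, is a tubular strip around $\Gamma$ of width $O(h)$: there is $\bar c>0$, depending only on the mesh regularity and the constants in Assumptions \ref{asm0}--\ref{asm1}, such that $\Omega_h\setminus\Omega\subset\{0\le\phi<\bar c h\}\subset\Omega^\Gamma$ for $h$ small enough (this is of the same kind as the estimates used in \cite{nocut,phiFEM}, combining $\operatorname{diam}T\le h$ with $\|\phi-\phi_h\|_{\infty}\le Ch^2$ on $\Omega^\Gamma$, together with the fact that mesh elements of $\Th$ far from $\Gamma$ lie in $\Omega$). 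Then I cover $\Omega_h\setminus\Omega$ by the charts $\mathcal O_i$ of Assumption \ref{asm0}, fix a subordinate partition of unity $\{\chi_i\}$ with $\|\chi_i\|_{W^{s,\infty}}\le C$, and note that each $\chi_i v$ again vanishes on $\Omega$ and satisfies $\|\chi_i v\|_{s,\mathcal O_i\cap(\Omega_h\setminus\Omega)}\le C\|v\|_{s,\mathcal O_i\cap(\Omega_h\setminus\Omega)}$; since $v=\sum_i\chi_i v$ on $\Omega_h\setminus\Omega$ it suffices to prove the claimed bound for each $\chi_i v$ in a single chart. In the local coordinates $\xi=(\xi',\xi_d)$, $\xi'=(\xi_1,\dots,\xi_{d-1})$, $\xi_d=\phi$, the set $\Omega$ becomes $\{\xi_d<0\}$, $\Gamma$ becomes $\{\xi_d=0\}$, and the support of $\chi_i v$ is contained in $\{\xi'\in\omega_i,\ 0<\xi_d<\bar c h\}$.

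The core estimate is one-dimensional. Writing $w=\chi_i v$ in the $\xi$-coordinates, for almost every $\xi'$ the slice $t\mapsto w(\xi',t)$ lies in $H^s$ of an interval (by Fubini) and vanishes for $t<0$ (because $w\in H^s$ and $w\equiv0$ for $\xi_d<0$); by the one-dimensional embedding $H^s\hookrightarrow C^{s-1}$ this forces $\partial_t^j w(\xi',0^+)=0$ for $j=0,\dots,s-1$. Taylor's formula with integral remainder gives
\[
w(\xi',\xi_d)=\frac{1}{(s-1)!}\int_0^{\xi_d}(\xi_d-t)^{s-1}\,\partial_t^s w(\xi',t)\,dt ,
\]
whence, by Cauchy--Schwarz, $|w(\xi',\xi_d)|^2\le C\,\xi_d^{2s-1}\,\|\partial_t^s w(\xi',\cdot)\|_{0,(0,\bar c h)}^2$. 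Integrating first in $\xi_d$ over $(0,\bar c h)$ and then in $\xi'$ over $\omega_i$ yields
\[
\|w\|_{0,\{0<\xi_d<\bar c h\}}^2\le C h^{2s}\,\|\partial_t^s w\|_{0,\{0<\xi_d<\bar c h\}}^2\le Ch^{2s}\,\|w\|_{s}^2 .
\]

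Finally I would transfer the inequality back to the $x$-variables. By Assumption \ref{asm0} the maps $x\mapsto\xi$ and $\xi\mapsto x$ and their derivatives up to order $k+1\ge s$ are bounded, and the Jacobians are bounded above and below, so the $L^2$ and $H^s$ norms computed in the $\xi$- and $x$-coordinates over corresponding sets are equivalent with constants depending only on $C_0$, $m$, $s$ (this is precisely where $s\le k+1$ enters). Summing the chart-wise estimates over the finite cover, using bounded overlap of the $\mathcal O_i$ and $\sum_i\|\chi_i v\|_{s}^2\le C\|v\|_{s,\Omega_h\setminus\Omega}^2$, gives $\|v\|_{0,\Omega_h\setminus\Omega}\le Ch^s\|v\|_{s,\Omega_h\setminus\Omega}$. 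The only genuinely delicate step is the passage from ``$v\in H^s(\Omega_h)$ and $v\equiv0$ on $\Omega$'' to ``the transverse derivatives of $v$ up to order $s-1$ vanish on $\Gamma$''; this is a standard trace/restriction argument, legitimate because $\Omega_h$ contains a full neighbourhood of $\Gamma$ on which $v$ is globally $H^s$, so slicing in the $\xi_d$-direction is meaningful for almost every $\xi'$.
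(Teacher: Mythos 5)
Your proof is correct. Note that the paper itself does not prove this lemma: it is simply imported from the earlier $\phi$-FEM paper \cite{phiFEM}, so there is no in-text argument to compare against. Your route --- confine $\Omega_h\setminus\Omega$ to a strip $\{0\le\phi\le \bar c h\}$, flatten via the coordinates of Assumption \ref{asm0}, observe that $v\in H^s$ vanishing on $\{\xi_d<0\}$ has vanishing transverse traces $\partial_{\xi_d}^j v|_{\xi_d=0^+}=0$ for $j\le s-1$, and then apply Taylor with integral remainder plus Cauchy--Schwarz to gain $\xi_d^{s-1/2}$ --- is the standard one and is essentially the mechanism behind the cited result; an equivalent packaging within this paper's toolbox is to iterate the Hardy-type Lemma \ref{lemma:hardy} to get $\|v/\phi^s\|_{0}\le C\|v\|_{s}$ and then use $|\phi|\le Ch$ on $\Omega_h\setminus\Omega$. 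The two places where you are (rightly) careful are exactly the places that need care: the $O(h)$ width of the strip, which follows from $\operatorname{diam}T\le h$, the Lipschitz bound on $\phi$ and $\|\phi-\phi_h\|_\infty\le Ch^2$ on cut cells, and the slicing argument, which requires $v$ (extended by zero, harmlessly, since $v\equiv 0$ on $\Omega\cap\Omega_h$) to be $H^s$ on a full two-sided neighbourhood of $\Gamma$ so that the one-dimensional restrictions make sense for a.e.\ $\xi'$. The constant you obtain depends on $C_0$, $m$ and $s\le k+1$, consistent with the statement.
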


\begin{lemma}\label{Prelim3}
  For all piecewise polynomial (possibly discontinuous) functions $v_h$ on $\Th^\Gamma$, $\quad 
    \|v_h \|_{0, \Gamma_h} \leq \frac {C} {\sqrt{h}} \|v_h \|_{0,\Omega_h^{\Gamma}}
$
with a constant $C>0$ depending on the maximal degree of polynomials in $v_h$ and on the constants in Assumptions \ref{asm0}--\ref{asm2}.
\end{lemma}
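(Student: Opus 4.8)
The plan is to localize the estimate to a single cut element and then reduce, by affine scaling, to a statement about polynomials of bounded degree on a reference configuration of size $O(1)$.

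First, decomposing $\Gamma_h$ into the pieces $\Gamma_h\cap T$, $T\in\Th^\Gamma$ (a point of $\Gamma_h$ lying on an interelement facet being counted at most twice), it suffices to prove that for each $T\in\Th^\Gamma$ one has $\|v_h\|_{0,\Gamma_h\cap T}^2\le \frac{C}{h}\|v_h\|_{0,T}^2$, and then sum over $T\in\Th^\Gamma$. Fix such a $T$, let $F_T:\hat T\to T$ be the affine map from a fixed reference simplex $\hat T$, and set $\hat v=v_h\circ F_T\in\mathbb{P}_n(\hat T)$, where $n$ is the maximal polynomial degree in $v_h$, and $\hat\phi=\phi_h|_T\circ F_T\in\mathbb{P}_l(\hat T)$. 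By mesh quasi-uniformity the singular values of the linear part of $F_T$ are all comparable to $h$, so $\|v_h\|_{0,T}^2\simeq h^d\|\hat v\|_{0,\hat T}^2$ and $\|v_h\|_{0,\Gamma_h\cap T}^2\simeq h^{d-1}\|\hat v\|_{0,\hat\Gamma}^2$, where $\hat\Gamma:=\{\hat\phi=0\}\cap\hat T$. Hence the claim reduces to the scale-invariant inequality $\|\hat v\|_{0,\hat\Gamma}^2\le C\|\hat v\|_{0,\hat T}^2$ with $C$ depending only on $n$, $l$, $d$ and the reference element.

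To prove this I would write $\|\hat v\|_{0,\hat\Gamma}^2\le \mathcal{H}^{d-1}(\hat\Gamma)\,\|\hat v\|_{L^\infty(\hat T)}^2$ and bound the two factors separately. For the second factor, $\|\hat v\|_{L^\infty(\hat T)}^2\le C_n\|\hat v\|_{0,\hat T}^2$ is the equivalence of the $L^\infty$ and $L^2$ norms on the finite-dimensional space $\mathbb{P}_n(\hat T)$, with $C_n$ depending only on $n$ and $d$. For the first factor, $\hat\Gamma$ is contained in the zero set of $\hat\phi$, a polynomial of degree $\le l$ which is not identically zero because $|\nabla\phi_h|\ge m/2$ on cut elements by Assumption \ref{asm1}; by Crofton's formula from integral geometry,
$$ \mathcal{H}^{d-1}\big(\{\hat\phi=0\}\cap\hat T\big)=c_d\int \#\big(\ell\cap\{\hat\phi=0\}\cap\hat T\big)\,d\ell, $$
the integral running over affine lines $\ell$. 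For almost every $\ell$ the restriction $\hat\phi|_\ell$ is a nonzero univariate polynomial of degree $\le l$, hence has at most $l$ zeros, while the measure of the lines meeting $\hat T$ is finite (bounded in terms of $\operatorname{diam}\hat T$). Therefore $\mathcal{H}^{d-1}(\hat\Gamma)\le C_{l,d}$, uniformly over all admissible $\hat\phi$. Combining the two bounds and scaling back yields the lemma.

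The main obstacle is precisely this last point: an arbitrarily bad intersection of $\Gamma_h$ with a mesh cell could a priori make the surface measure of the cut uncontrollable, which is what wrecks naive scaling arguments in unfitted analyses. Here it is tamed by the fact that $\Gamma_h$ is the zero set of a polynomial of degree bounded by $l$, so its $(d-1)$-dimensional area inside a region of size $O(1)$ is bounded by a constant depending only on $l$ and $d$. Everything else — the affine scaling, the norm-equivalence estimate on $\mathbb{P}_n(\hat T)$, and the summation over elements — is routine.
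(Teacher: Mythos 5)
Your proof is correct, and it supplies precisely the content that the paper compresses into the single phrase ``a scaling argument on all $T\in\Th^\Gamma$.'' The element-by-element reduction, the affine scaling with the $h^{d-1}$ versus $h^d$ bookkeeping for the surface and volume integrals, and the $L^\infty$--$L^2$ equivalence on $\mathbb{P}_n(\hat T)$ are all as one would expect; the one genuinely nontrivial ingredient is the uniform bound $\mathcal{H}^{d-1}(\{\hat\phi=0\}\cap\hat T)\le C_{l,d}$, independent of how $\Gamma_h$ cuts the cell, and your Cauchy--Crofton argument handles it cleanly (the zero set of a nonzero real polynomial is $(d-1)$-rectifiable with locally finite measure, lines along which $\hat\phi$ vanishes identically form a null set of the line space since $\hat\phi|_T\not\equiv 0$ by Assumption \ref{asm1}, and almost every other line meets the zero set in at most $l$ points). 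The alternative route, which is the one this paper uses for its other element-level estimates (e.g.\ in the first technical lemma and in Step 1 of the conditioning proof), is a compactness argument: maximize the scale-invariant ratio over the compact set of normalized pairs $(\hat v,\hat\phi)$ on the reference cell and argue the supremum is finite. Your version is more explicit and avoids having to establish upper semicontinuity of $\hat\phi\mapsto\|\hat v\|_{0,\{\hat\phi=0\}\cap\hat T}$, at the price of invoking integral geometry; either is acceptable, and your write-up correctly identifies the arbitrary-cut issue as the only real obstacle.
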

\begin{proof} A scaling argument on all $T \in \Th^{\Gamma}$.
\end{proof}


Finally, we recall a Hardy-type lemma, cf. \cite{phiFEM}. 
\begin{lemma}
  \label{lemma:hardy}Assume that the domain $\Omega^\Gamma$ is a neighborhood of $\Gamma$,
 given by (\ref{eq:level-set}), and satisfies Assumption \ref{asm0}. Then, for any $u \in H^{s + 1} (\Omega^\Gamma)$ vanishing on $\Gamma$ and an integer $s\in[0,k]$, it holds
$ \left\| \frac{u}{\phi} \right\|_{s, \Omega^\Gamma} \le C \| u
     \|_{s + 1, \Omega^\Gamma}$  with $C > 0$ depending only on the constants in Assumption \ref{asm0} and on $s$. 
\end{lemma}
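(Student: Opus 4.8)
The plan is to reduce the statement to the classical one‑dimensional Hardy inequality by using the local coordinates provided by Assumption \ref{asm0}. Cover $\Omega^\Gamma$ by the sets $\mathcal{O}_i$ and fix a smooth partition of unity $\{\chi_i\}$ subordinate to this cover; since each $\chi_i$ is fixed (depending only on the geometry) and the number of patches is bounded, it suffices to prove the estimate for each $\chi_i u$ in the coordinates $\xi=(\xi_1,\dots,\xi_d)$ on $\mathcal{O}_i$ with $\xi_d=\phi$. In these coordinates $\Gamma=\{\xi_d=0\}$, the domain locally becomes (a subset of) $\{\xi_d\in(-a,0)\}$ say, and $u/\phi$ becomes $u/\xi_d$. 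Because $u$ vanishes on $\Gamma$, i.e. $u(\xi',0)=0$, the fundamental theorem of calculus gives the representation
\[
\frac{u(\xi',\xi_d)}{\xi_d}=\int_0^1 (\partial_{\xi_d}u)(\xi',t\xi_d)\,dt .
\]

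Next I would differentiate this representation. For a multi-index $\alpha$ with $|\alpha|\le s\le k$, writing $\alpha_d$ for its $\xi_d$-component, differentiation under the integral sign yields
\[
\partial^\alpha_\xi\!\left(\frac{u}{\xi_d}\right)(\xi)
=\int_0^1 t^{\alpha_d}\,(\partial^\alpha_\xi\partial_{\xi_d}u)(\xi',t\xi_d)\,dt ,
\]
so that no negative powers of $\xi_d$ ever appear (the only ``singular'' division is absorbed once and for all by the integral formula). Taking the $L^2$ norm in $\xi$, applying Minkowski's integral inequality, and then the change of variables $\eta_d=t\xi_d$ (with Jacobian $1/t$) gives, for each fixed $t\in(0,1)$, $\|(\partial^\alpha\partial_{\xi_d}u)(\cdot,t\,\cdot)\|_{0}\le t^{-1/2}\|\partial^\alpha\partial_{\xi_d}u\|_{0}$. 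Hence
\[
\left\|\partial^\alpha_\xi\!\left(\tfrac{u}{\xi_d}\right)\right\|_{0}
\le\left(\int_0^1 t^{\alpha_d-1/2}\,dt\right)\|\partial^\alpha\partial_{\xi_d}u\|_{0}
\le 2\,\|u\|_{s+1},
\]
since $\alpha_d\ge 0$ implies $\int_0^1 t^{\alpha_d-1/2}\,dt=(\alpha_d+\tfrac12)^{-1}\le 2$. Summing over $|\alpha|\le s$ gives the bound $\|u/\xi_d\|_{s}\le C\|u\|_{s+1}$ in the $\xi$ variables.

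Finally I would transfer this back to the physical $x$ coordinates: by the chain rule, $\partial^\alpha_x(\chi_i u/\phi)$ is a finite combination of products of $\partial^\beta_\xi(\chi_i u/\xi_d)$ (with $|\beta|\le|\alpha|\le s\le k$) and derivatives $\partial^\gamma\xi/\partial x^\gamma$ of order $\le k+1$, all of which are bounded by $C_0$ under Assumption \ref{asm0}; the Jacobian determinants are likewise bounded above and below. This reverses the coordinate change at the cost of a constant depending only on $C_0$ and $s$, and summing over $i$ completes the proof. The one point that needs care — and is the only real obstacle — is keeping the change-of-variables bookkeeping (partition of unity, chain rule, Jacobians) under control; but this is precisely what Assumption \ref{asm0} is designed for, since it bounds exactly the derivatives of order up to $k+1\ge s+1$ that the Leibniz/chain-rule expansions require.
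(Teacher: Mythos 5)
Your proof is correct. The paper does not actually prove this lemma (it only recalls it from \cite{phiFEM}), but the argument there is essentially the one you reconstruct: flatten $\Gamma$ via the coordinates of Assumption \ref{asm0} with $\xi_d=\phi$, write $u/\xi_d=\int_0^1\partial_{\xi_d}u(\xi',t\xi_d)\,dt$, differentiate under the integral, and use the scaling factor $t^{-1/2}$, integrable against $t^{\alpha_d}$. The only points needing (routine) care, which you correctly flag, are choosing the charts so that the segments $(\xi',t\xi_d)$, $t\in[0,1]$, remain inside them, and noting that the chain-rule bookkeeping uses exactly the order-$(k+1)$ derivative bounds provided by Assumption \ref{asm0}.
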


\subsection{Coercivity of the bilinear form $a$}

It will be convenient to rewrite the bilinear form $a_h$ in a manner avoiding
the integral on $\partial \Omega_h$. To this end, we recall that $B_h$ is the
strip between $\partial \Omega_h$ and $\Gamma_h$ and observe for any $y \in
H^1 (B_h)^d$, $v \in H^1 (B_h)$, $q \in L^2 (\Gamma_h)$:
\[ \int_{\partial \Omega_h} y \cdot nv_{} = \int_{\partial \Omega_h} y \cdot
   nv - \int_{\Gamma_h} \frac{1}{| \nabla \phi_h |}  (y \cdot \nabla \phi_h)
   v_{} + \int_{\Gamma_h} \frac{1}{| \nabla \phi_h |}  (y \cdot \nabla \phi_h
   +  \frac{1}{h}q \phi_h) v \]
\[ = \int_{B_h} (v_{} \Div y + y \cdot \nabla v) + \int_{\Gamma_h} \frac{1}{|
   \nabla \phi_h |}  (y \cdot \nabla \phi_h +  \frac{1}{h}q \phi_h) v . \]
Indeed, $\phi_h = 0$ on $\Gamma_h$ and the unit normal to $\Gamma_h$, looking
outward from $B_h,$ is equal to $- \nabla \phi_h / | \nabla \phi_h |$. Thus,
\begin{multline} \label{altah} 
 a_h (u, y, p ; v, z, q) = \int_{\Omega_h} \nabla u \cdot
  \nabla v + \int_{\Omega_h} uv + \int_{B_h} (v_{} \Div y + y \cdot \nabla v)\\
  + \int_{\Gamma_h} \frac{1}{| \nabla \phi_h |}  (y \cdot \nabla \phi_h  + \frac{1}{h}q  \phi_h) v 
  + \gamma_{div}  \int_{\Omega_h^{\Gamma}} (\Div y + u)  (\Div z + v)
  + \gamma_u  \int_{\Omega_h^{\Gamma}} (y + \nabla u) \cdot (z + \nabla v) \\
  +  \sigma h \int_{\Gamma_h^i} \left[ \ddn{u} \right] \left[ \ddn{v} \right] 
  + \frac{\gamma_p}{h^2}  \int_{\Omega_h^{\Gamma}} (y \cdot \nabla \phi_h  + \frac{1}{h} p \phi_h)  (z \cdot \nabla \phi_h + \frac{1}{h}q \phi_h).
\end{multline}

\begin{proposition}
  \label{LemRob:coer}Provided $\gamma_{div}, \gamma_u, \gamma_p, \sigma$ are
  sufficiently big, there exists an $h$-independent constant $c > 0$ such that
  \begin{eqnarray*}
    a_h (v_h, z_h, q_h ; v_h, z_h, q_h) \ge c \triple{ v_h, z_h, q_h }_h^2,
    \quad \forall (v_h, z_h , q_h) \in W_h^{(k)}
  \end{eqnarray*}
  with
   \begin{equation*}
   \triple{ v_{}, z_{}, q_{} }_h^2 = \| v \|_{1, \Omega_h}^2 + \| \Div
    z+v\|^2_{0, \Omega_h^{\Gamma}} +  \|z + \nabla v\|^2_{0,
    \Omega_h^{\Gamma}}+ h \left\| \left[ \ddn{v} \right] \right\|^2_{0,
    \Gamma_h^i} 
    + \frac{1}{h^2} \left\| z \cdot \nabla \phi_h + \frac{1}{h}q \phi_h \right\|_{0, \Omega_h^{\Gamma}}^2.
   \end{equation*} 
\end{proposition}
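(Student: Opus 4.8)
The plan is to evaluate $a_h(v_h,z_h,q_h;v_h,z_h,q_h)$ on the diagonal and show it dominates $\triple{v_h,z_h,q_h}_h^2$. Plugging $(v_h,z_h,q_h)$ into the symmetric form (\ref{altah}), all least-squares terms become squares, so
\[
a_h(v_h,z_h,q_h;v_h,z_h,q_h)= |v_h|_{1,\Omega_h}^2 + \|v_h\|_{0,\Omega_h}^2 + \int_{B_h}(v_h\Div z_h + z_h\cdot\nabla v_h) + \gamma_{div}\|\Div z_h+v_h\|_{0,\Omega_h^\Gamma}^2 + \gamma_u\|z_h+\nabla v_h\|_{0,\Omega_h^\Gamma}^2 + \sigma h\left\|\left[\ddn{v_h}\right]\right\|_{0,\Gamma_h^i}^2 + \frac{\gamma_p}{h^2}\left\|z_h\cdot\nabla\phi_h+\tfrac1h q_h\phi_h\right\|_{0,\Omega_h^\Gamma}^2 + \int_{\Gamma_h}\frac{1}{|\nabla\phi_h|}\left(z_h\cdot\nabla\phi_h+\tfrac1h q_h\phi_h\right)v_h.
\]
Every term is already a (weighted) piece of the triple-norm except the two "cross" terms $\int_{B_h}(v_h\Div z_h+z_h\cdot\nabla v_h)$ and the boundary integral over $\Gamma_h$; the whole job is to absorb these two into the good squares, using the previous lemma (the adapted bound on $\int_{B_h}z_h\cdot\nabla v_h$) and Lemmas \ref{Prelim1} and \ref{Prelim3}, at the price of enlarging $\gamma_{div},\gamma_u,\gamma_p,\sigma$.

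First I would handle the term $\int_{B_h}z_h\cdot\nabla v_h$: by the preceding lemma, for any $\beta>0$ it is bounded by $\alpha|v_h|_{1,\Omega_h}^2 + \delta\|z_h+\nabla v_h\|_{0,\Omega_h^\Gamma}^2 + \beta h\|[\ddn{v_h}]\|_{0,\Gamma_h^i}^2 + \beta h^2\|\Div z_h+v_h\|_{0,\Omega_h^\Gamma}^2 + \beta h^2\|v_h\|_{0,\Omega_h^\Gamma}^2$ with $\alpha<1$. Next, $\int_{B_h}v_h\Div z_h$: since $B_h\subset\Omega_h^\Gamma$, this is bounded by $\|v_h\|_{0,\Omega_h^\Gamma}\|\Div z_h\|_{0,\Omega_h^\Gamma}$, and writing $\Div z_h=(\Div z_h+v_h)-v_h$ and using Young's inequality with a small parameter yields $\le \epsilon\|\Div z_h+v_h\|_{0,\Omega_h^\Gamma}^2 + C_\epsilon\|v_h\|_{0,\Omega_h^\Gamma}^2$; the $h^2$ weights in the first lemma and the fact that $\|v_h\|_{0,\Omega_h^\Gamma}\le\|v_h\|_{0,\Omega_h}$ is already controlled by the $\|v_h\|_{1,\Omega_h}^2$ term make these harmless. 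Then I would treat the $\Gamma_h$ boundary term: bound it by $\frac{1}{m}\|z_h\cdot\nabla\phi_h+\tfrac1h q_h\phi_h\|_{0,\Gamma_h}\|v_h\|_{0,\Gamma_h}$ (Assumption \ref{asm1} gives $|\nabla\phi_h|\ge m/2$), apply Lemma \ref{Prelim3} to both factors to gain a power $h^{-1/2}$ each, getting $\le \frac{C}{h}\|z_h\cdot\nabla\phi_h+\tfrac1h q_h\phi_h\|_{0,\Omega_h^\Gamma}\|v_h\|_{0,\Omega_h^\Gamma}$, and finish with Young: $\le \epsilon'\frac{1}{h^2}\|z_h\cdot\nabla\phi_h+\tfrac1h q_h\phi_h\|_{0,\Omega_h^\Gamma}^2 + C_{\epsilon'}\|v_h\|_{0,\Omega_h^\Gamma}^2$. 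Again the leftover $\|v_h\|_{0,\Omega_h^\Gamma}^2$ is absorbed by $\|v_h\|_{1,\Omega_h}^2$.

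Collecting everything, I obtain a lower bound of the shape
\[
a_h(v_h,z_h,q_h;v_h,z_h,q_h) \ge (1-\alpha-C\epsilon)|v_h|_{1,\Omega_h}^2 + (1-C(\epsilon+\epsilon'))\|v_h\|_{0,\Omega_h}^2 + (\gamma_{div}-\epsilon-C\beta)\|\Div z_h+v_h\|_{0,\Omega_h^\Gamma}^2 + (\gamma_u-\delta)\|z_h+\nabla v_h\|_{0,\Omega_h^\Gamma}^2 + (\sigma-\beta)h\left\|\left[\ddn{v_h}\right]\right\|_{0,\Gamma_h^i}^2 + (\gamma_p-\epsilon')\frac{1}{h^2}\left\|z_h\cdot\nabla\phi_h+\tfrac1h q_h\phi_h\right\|_{0,\Omega_h^\Gamma}^2.
\]
The recipe for fixing the constants: first pick $\epsilon,\epsilon'$ small enough that the coefficients of $|v_h|_{1,\Omega_h}^2$ and $\|v_h\|_{0,\Omega_h}^2$ stay bounded below by a positive constant (using $\alpha<1$); then choose $\beta$ small (to keep $\sigma-\beta$ under control later, but mostly just bounded); then take $\gamma_u>\delta$, $\gamma_{div}>\epsilon+C\beta$, $\gamma_p>\epsilon'$, $\sigma>\beta$ — note $\delta$ depends on the $\beta$ already chosen, which is why $\beta$ is fixed first. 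This gives $c>0$ independent of $h$.

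The main obstacle, and the reason the auxiliary lemma is stated in exactly that form, is the cross term $\int_{B_h}z_h\cdot\nabla v_h$: naive Cauchy–Schwarz on $B_h$ only gives $\|z_h\|_{0,B_h}|v_h|_{1,B_h}$, and $\|z_h\|_{0,\Omega_h^\Gamma}$ is \emph{not} part of the triple-norm (only $\|z_h+\nabla v_h\|$ is), so one cannot close the estimate without the Ghost-penalty/patch argument that trades the missing control of $z_h$ on the cut strip against the jump term on $\Gamma_h^i$ and the least-squares residuals. The other terms are routine applications of Young's inequality, the trace-type Lemma \ref{Prelim3}, and the strip estimate Lemma \ref{Prelim1}.
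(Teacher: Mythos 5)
Your overall architecture is the same as the paper's: start from the rewritten form (\ref{altah}), observe that on the diagonal everything is a square except the two cross terms $\int_{B_h}(v_h\Div z_h+z_h\cdot\nabla v_h)$ and the integral over $\Gamma_h$, dispose of $\int_{B_h}z_h\cdot\nabla v_h$ with the patch lemma (\ref{Neu:prop1}), of the $\Gamma_h$ term with Lemma \ref{Prelim3}, and absorb what is left by taking $\gamma_{div},\gamma_u,\gamma_p,\sigma$ large; your diagnosis of why the patch lemma is indispensable (the triple norm controls $\|z_h+\nabla v_h\|$ but not $\|z_h\|_{0,\Omega_h^\Gamma}$ alone) is also exactly the paper's point.

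However, the final absorption step contains a genuine gap. In your Young inequalities for $\int_{B_h}v_h\Div z_h$ and for the $\Gamma_h$ term you put the small parameters $\epsilon,\epsilon'$ on the $\gamma$-weighted squares and the large constants $C_\epsilon$, $C_{\epsilon'}$ on $\|v_h\|_{0,\Omega_h^\Gamma}^2$, and then claim that $C_\epsilon\|v_h\|_{0,\Omega_h^\Gamma}^2$ is ``absorbed by $\|v_h\|_{1,\Omega_h}^2$''. It cannot be: the only positive $L^2$ contribution available is $\int_{\Omega_h}v_h^2$ with the fixed coefficient $1$, and $|v_h|_{1,\Omega_h}^2$ gives no control of $\|v_h\|_{0,\Omega_h^\Gamma}^2$ whatsoever (take $v_h$ constant), so subtracting a large multiple of $\|v_h\|_{0,\Omega_h^\Gamma}^2$ destroys the lower bound. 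Your own final display, with coefficients $1-C(\epsilon+\epsilon')$ on the $L^2$ term and $\gamma_{div}-\epsilon$, $\gamma_p-\epsilon'$ on the least-squares terms, is inconsistent with these Young inequalities: one of the two constants in each pair must be $O(1/\epsilon)$. The inequalities must be oriented the other way, as in the paper: a small $\varepsilon$ in front of $\|v_h\|_{0,\Omega_h^\Gamma}^2$ and $O(1/\varepsilon)$ in front of $\|\Div z_h+v_h\|^2_{0,\Omega_h^\Gamma}$ and $h^{-2}\|z_h\cdot\nabla\phi_h+\frac1h q_h\phi_h\|^2_{0,\Omega_h^\Gamma}$ — this is precisely what the hypothesis that $\gamma_{div}$ and $\gamma_p$ be sufficiently big is for. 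Even after this correction a residual $-\varepsilon\|v_h\|_{0,\Omega_h^\Gamma}^2$ remains while the positive $L^2$ budget over the cut strip has already been spent against $\int_{B_h}v_h\Div z_h$, and the triple norm requires a positive multiple of the \emph{full} $\|v_h\|_{0,\Omega_h}^2$. The missing ingredient is the bound $\|v_h\|_{0,\Omega_h^\Gamma}^2\le C(\|v_h\|_{0,\Omega_h^i}^2+h^2|v_h|_{1,\Omega_h}^2)$, obtained from Lemma \ref{Prelim1} combined with an inverse trace inequality on $\Gamma_h^i$; it both neutralizes the residual negative term (for $\varepsilon$ small) and lets one reinstate $\kappa\|v_h\|_{0,\Omega_h^\Gamma}^2$ for a small $\kappa>0$. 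You list Lemma \ref{Prelim1} among your tools but never actually deploy it, and without it the argument does not close.
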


\begin{proof}
  Using the reformulation of the bilinear form $a_h$ given by (\ref{altah}), we have for all $(v_h, z_h,q_h) \in W_h^{(k)}$,
  \begin{multline*}
    a_h (v_h, z_h, q_h ; v_h, z_h, q_h) =  |v_h|_{1, \Omega_h}^2 + \| v_h \|_{0, \Omega_h}^2 +
    \int_{B_h} (v_h \Div z_h + z_h \cdot \nabla v_h) \\+
    \int_{\Gamma_h} \frac{1}{| \nabla \phi_h |} (z_h \cdot \nabla
    \phi_h + \frac{1}{h}q_h \phi_h) v_h
     + \gamma_{div} \| \Div z_h+v_h \|^2_{0, \Omega_h^{\Gamma}} + \gamma_u
    \|z_h + \nabla v_h \|^2_{0, \Omega_h^{\Gamma}} \\+ \sigma h \left\| \left[
    \ddn{v_h} \right] \right\|_{0, \Gamma_h^i}^2 + \frac{\gamma_p}{h^2} \| z_h
    \cdot \nabla \phi_h + \frac{1}{h}q_h \phi_h \|_{0, \Omega_h^{\Gamma}}^2.
  \end{multline*}
 Since   $B_h \subset \Omega_h^{\Gamma}$, we remark that
  the  integral of $v_h \Div z_h$ can be combined with that of $v_h$ on $\Omega_h^{\Gamma}$ to give
  \[ \|v_h \|_{0, \Omega_h^{\Gamma}}^2 + \int_{B_h} v_h \Div z_h \ge \int_{B_h} v_h \left(
     \Div z_h + v_h \right) \geq - \|v_h \|_{0, \Omega^{\Gamma}_h} \|
     \Div z_h + v_h \|_{0, \Omega^{\Gamma}_h}. \]
  We also use an inverse inequality from Lemma \ref{Prelim3} and the fact that $1/| \nabla \phi_h |$ is uniformly bounded by Assumption \ref{asm1}, to estimate
  \[ \left| \int_{\Gamma_h} \frac{1}{| \nabla \phi_h |} (z_h \cdot
     \nabla \phi_h +\frac{1}{h}q_h \phi_h) v_h \right| \leq \frac{C}{h}  \|z_h
     \cdot \nabla \phi_h + \frac{1}{h}q_h \phi_h \|_{0, \Omega_h^{\Gamma}} \|v_h \|_{0,
     \Omega_h^{\Gamma}}. \]
  Applying the Young inequality (for any $\varepsilon > 0$) to the last two bounds and combining this with \eqref{Neu:prop1}  yields
  \begin{multline*}
    a_h (v_h, z_h ,q_h; v_h, z_h,q_h) \geq (1 - \alpha) |v_h |_{1, \Omega_h}^2
    + \|v_h \|_{0, \Omega^i_h}^2 - (\varepsilon+\beta h^2) \|v_h \|^2_{0,
    \Omega_h^{\Gamma}}\\
    + \left( \gamma_{div} - \frac{1}{2 \varepsilon} - \beta h^2 \right) \|
    \Div z_h + v_h \|^2_{0, \Omega_h^{\Gamma}} + (\gamma_u - \delta)  \|z_h +
    \nabla v_h \|^2_{0, \Omega_h^{\Gamma}} \\+ (\sigma - \beta) h \left\| \left[
    \ddn{v_h} \right] \right\|_{0, \Gamma_h^i}^2
    + \left( \frac{\gamma_p}{h^2} - \frac{C^2}{2 \varepsilon h^2} \right)
    \|z_h \cdot \nabla \phi_h + \frac{1}{h}q_h \phi_h \|_{0, \Omega_h^{\Gamma}}^2.
  \end{multline*}

  To bound further from below the first 3 terms we note, using Lemma \ref{Prelim1} and the trace inverse inequality,
  $$ \|v_h \|^2_{0, \Omega_h^{\Gamma}} \leq C (h\|v_h \|^2_{0,
     \Gamma_h^i} + h^2 | v_h |^2_{1, \Omega_h^{\Gamma}}) \leq C (\|v_h
     \|^2_{0, \Omega_h^i} + h^2 | v_h |^2_{1, \Omega_h}) $$
  so that, introducing any $\kappa\geq 0$ and observing $h\le h_0:=\text{diam}(\Omega)$,
  \begin{multline*}
 (1 - \alpha) |v_h |_{1, \Omega_h}^2 + \|v_h \|_{0, \Omega^i_h}^2 -
     (\varepsilon +\beta h^2) \|v_h \|^2_{0, \Omega_h^{\Gamma}} \\
     \geq (1 - \alpha) |v_h
     |_{1, \Omega_h}^2 + \|v_h \|_{0, \Omega^i_h}^2 + \kappa \|v_h \|^2_{0,
     \Omega_h^{\Gamma}}- (\varepsilon +\beta h_0^2 + \kappa) \|v_h \|^2_{0, \Omega_h^{\Gamma}} \\
   \geq (1 - \alpha - C (\varepsilon  +\beta h_0^2+ \kappa) h_0^2) |v_h |_{1, \Omega_h}^2 +
     (1 - C (\varepsilon  +\beta h_0^2+ \kappa)) \|v_h \|_{0, \Omega^i_h}^2 + \kappa \|v_h \|^2_{0,
     \Omega_h^{\Gamma}} .
  \end	{multline*}
  Taking $\varepsilon, \kappa, \beta$ sufficiently small and $\gamma_u, \gamma_p,
  \gamma_{div}$ sufficiently big, gives the announced lower bound for $a_h (v_h, z_h, q_h ; v_h, z_h, q_h)$.
\end{proof}

\subsection{Proof of the $H^1$ error estimate in Theorem \ref{th:error}}

  Under the Theorem's assumptions, the solution to (\ref{Neu:P}) is indeed in
  $H^{k + 2} (\Omega)$ and it can be extended to a function $\tilde{u} \in
  H^{k + 2} (\Omega_h)$ such that $\tilde{u} = u$ on $\Omega$ and
  \begin{equation}\label{ureg}
 \| \tilde{u} \|_{k + 2, \Omega_h} \le C (\|f\|_{k, \Omega} +\|g\|_{k + 1
     / 2, \Gamma}) \leq \|f\|_{k, \Omega} .    
  \end{equation}
  Introduce $y = - \nabla \tilde{u}$ and $p = - \frac{h}{\phi}  y \cdot
  \nabla \phi $ on $\Omega_h^{\Gamma}$.
  Then, $y\in H^{k + 1} (\Omega_h^{\Gamma})$ and $p \in H^k (\Omega_h^{\Gamma})$ by Lemma \ref{lemma:hardy}. Moreover, 
  \begin{equation}\label{yreg}
  \| y \|_{k + 1, \Omega_h^\Gamma} \le C \| \tilde{u} \|_{k + 2, \Omega_h} 
  \leq C \|f\|_{k, \Omega}
  \mbox{~~and~~}
  \| p \|_{k, \Omega_h^\Gamma} 
  \le Ch \| y \|_{k + 1, \Omega_h^\Gamma}  
  \leq Ch \|f\|_{k, \Omega} .
  \end{equation}
  Clearly, $\tilde{u}$, $y$, $p$ satisfy
  \begin{multline*}
    a_h (\tilde{u}, y, p ; v_h, z_h, q_h) = \int_{\Omega_h} \tilde{f} v_h +
    \gamma_{div}  \int_{\Omega_h^{\Gamma}} \tilde{f} (\Div z_h+v_h) +
    \frac{\gamma_p}{h^2}  \int_{\Omega_h^{\Gamma}} (y \cdot \nabla \phi_h + \frac{1}{h}p
    \phi_h)  (z_h \cdot \nabla \phi_h + \frac{1}{h}q_h \phi_h),  \\
    \quad \forall (v_h, z_h, q_h) \in W_h^{(k)}
  \end{multline*}
  with $\tilde{f} \assign - \Delta \tilde{u}+ \tilde u$. It entails a Galerkin
  orthogonality relation
  \begin{multline}
    a_h  (\tilde{u} - u_h, y - y_h, p - p_h ; v_h, z_h, q_h) =
    \int_{\Omega_h} (\tilde{f} - f) v_h + 
     \gamma_{div}  \int_{\Omega_h^{\Gamma}}  (\tilde{f} - f)( \Div z_h+v_h) \\
     +   \frac{\gamma_p}{h^2}  \int_{\Omega_h^{\Gamma}}
    (y \cdot \nabla \phi_h + \frac{1}{h}p \phi_h )  (z_h \cdot
    \nabla \phi_h + \frac{1}{h}q_h \phi_h),  
    \quad \forall (v_h, z_h, q_h) \in W_h^{(k)}.\label{Rob:GO}
  \end{multline}
  Introducing the standard nodal interpolation $I_h$ or, if necessary, a Cl\'ement interpolation (recall that $p$ is only in $H^1(\Omega_h^\Gamma)$ if $k=1$), we then have by 
  Proposition \ref{LemRob:coer}, 
  \begin{multline*}
   c\interleave u_h - I_h  \tilde{u}, y_h - I_h y, p_h - I_h p
     \interleave_h
      \leq \displaystyle\sup_{(v_h,z_h,q_h)\in W_h^{(k)}}
\frac{ a_h  (u_h-I_h\tilde{u},y_h-I_hy, p_h-I_hp ; v_h, z_h, q_h)}{\interleave v_h,z_h,q_h\interleave_h}\\
  \leq \sup_{(v_h,z_h,q_h)\in W_h^{(k)}}
\frac{I - II - III}{\interleave v_h,z_h,q_h\interleave_h},
\end{multline*}
  where
\begin{align*}
 I = a_h  (e_u,e_y,e_p ; v_h, z_h, q_h),~~~
 II &=\int_{\Omega_h} (\tilde{f} - f) v_h + \gamma_{div}  \int_{\Omega_h^{\Gamma}}
    (\tilde{f} - f)( \Div z_h+v_h), \\
  III&= \frac{\gamma_p}{h^2}  \int_{\Omega_h^{\Gamma}}
    (y \cdot \nabla \phi_h + \frac{1}{h}p \phi_h )  (z_h \cdot
    \nabla \phi_h + \frac{1}{h}q_h \phi_h),
\end{align*}
with $e_u=\tilde{u}-I_h\tilde{u},\  e_y=y-I_h\tilde{y}$ and $e_p=p-I_h\tilde{p}.$

We now estimate each term separately.
Recalling \eqref{altah}, we have
    \begin{multline*}
   I \leq \| e_u \|_{1, \Omega_h}\| v_h \|_{1, \Omega_h} +
\|\mbox{div} e_y\|_{0,B_h}\|v_h\|_{0,B_h}+\|e_y\|_{0,B_h}|v_h|_{1,B_h}\\
+\|\frac{1}{|\nabla \phi_h|}(e_y\cdot\nabla \phi_h+\frac{1}{h}e_p\phi_h)\|_{0,\Gamma_h}\|v_h\|_{0,\Gamma_h}
     + \gamma_{div} \| \Div e_y+e_u \|_{0, \Omega_h^{\Gamma}}\| \Div z_h+u_h \|_{0, \Omega_h^{\Gamma}} \\ + \gamma_u
    \|e_y + \nabla e_u \|_{0, \Omega_h^{\Gamma}} \|z_h + \nabla v_h \|_{0, \Omega_h^{\Gamma}} + \sigma h \left\| \left[
    \ddn{e_u} \right] \right\|_{0, \Gamma_h^i}\left\| \left[
    \ddn{v_h} \right] \right\|_{0, \Gamma_h^i} \\+ \frac{\gamma_p}{h^2} \| e_y
    \cdot \nabla \phi_h + \frac{1}{h}e_p \phi_h \|_{0, \Omega_h^{\Gamma}}\| z_h
    \cdot \nabla \phi_h + \frac{1}{h}q_h \phi_h \|_{0, \Omega_h^{\Gamma}}.
 \end{multline*}
Applying Lemma \ref{Prelim3} to the $L^2$ norms on $\Gamma_h$, recalling that  $1/|\nabla\phi_h|$ is uniformly bounded on $\Omega_h^\Gamma$ (cf. Assumption \ref{asm1}), and recombining the terms, we get
    \begin{equation*}
   I \leq
    C\left(
    \| e_u \|_{1, \Omega_h}^2+    \| e_y \|_{1, \Omega_h^{\Gamma}}^2+
      h \left\| \left[    \ddn{e_u} \right] \right\|_{0, \Gamma_h^i}^2
      +\frac{1}{h^2} \| e_y
    \cdot \nabla \phi_h + \frac{1}{h}e_p \phi_h \|_{0, \Omega_h^{\Gamma}}^2
    \right)^{1/2}\\
 \interleave v_h,z_h,q_h\interleave_h.
  \end{equation*}
  The usual interpolation estimates give
    \begin{equation*}\begin{array}{c}
    \| e_u \|_{1, \Omega_h}^2+    \| e_y \|_{1, \Omega_h^{\Gamma}}^2
      + h \left\| \left[    \ddn{e_u} \right] \right\|_{0, \Gamma_h^i}^2
      \leq Ch^{2k}(\|\tilde u\|_{k+1,\Omega_h}^2+\|y\|_{k+1,\Omega_h^{\Gamma}}^2)\,.
  \end{array} \end{equation*}
Moreover, recalling that  $|\nabla\phi_h|$ and $\frac{1}{h}|\phi_h|$ are uniformly bounded on $\Omega_h^\Gamma$, we get
       \begin{equation*}
 \frac{1}{h^2} \| e_y
    \cdot \nabla \phi_h + \frac{1}{h}e_p \phi_h \|_{0, \Omega_h^{\Gamma}}^2
    \leq  \frac{C}{h^2} \left(\| e_y\|_{0, \Omega_h^{\Gamma}}^2
   +\| e_p  \|_{0, \Omega_h^{\Gamma}}^2 \right)
    \leq Ch^{2k}(|y|_{k+1,\Omega_h^{\Gamma}}^2+\frac{1}{h^2}|p|_{k,\Omega_h^{\Gamma}}^2).
  \end{equation*}
  Thus, by regularity estimates (\ref{ureg}),
  $  I \leq
 Ch^{k}\|f\|_{k,\Omega}
 \interleave v_h,z_h,q_h\interleave_h.$
 
 We now estimate the second term
 \begin{eqnarray*}
|II|
    &\leq& C(\|\tilde{f} - f\|_{0,\Omega_h}\|v_h\|_{0,\Omega_h}
    +\|\tilde{f} - f\|_{0,\Omega_h^{\Gamma}}\|\Div z_h+v_h\|_{0,\Omega_h^{\Gamma}})\\
   & \leq& C\|\tilde{f} - f\|_{0,\Omega_h}\interleave v_h,z_h,q_h\interleave_h
    \leq Ch^k\|f\|_{k,\Omega\cup\Omega_h}\interleave v_h,z_h,q_h\interleave_h\,.
\end{eqnarray*}
Indeed, thanks to Lemma \ref{Prelim15} and $f=\tilde f$ on $\Omega$, 
\begin{equation}\label{ftildef}
  \|\tilde{f} - f\|_{0,\Omega_h}
= \|\tilde{f} - f\|_{0,\Omega_h\backslash\Omega}
\leq Ch^k\|\tilde{f} - f\|_{k,\Omega_h\backslash\Omega}
\leq Ch^k\|f\|_{k,\Omega\cup\Omega_h}.
\end{equation}
Finally,
  $$   |III|
    \leq
    \frac{C}{h} \| y \cdot \nabla \phi_h + \frac{1}{h}p \phi_h \|_{0, \Omega_h^{\Gamma}}\interleave v_h,z_h,q_h\interleave_h$$
and, recalling $y \cdot \nabla \phi + \frac{1}{h}p\phi =0$ on $\Omega_h^\Gamma$,
  \begin{multline*}
   \frac{1}{h} \| y \cdot \nabla \phi_h + \frac{1}{h}p \phi_h  \|_{0, \Omega_h^{\Gamma}} = \frac{1}{h} \| y \cdot \nabla
     (\phi_h - \phi) + \frac{1}{h}p (\phi_h - \phi) \|_{0, \Omega_h^{\Gamma}} 
    \\
   \leq \frac{1}{h} \| y \|_{0, \Omega_h^{\Gamma}} \| \nabla (\phi_h -
     \phi) \|_{\infty} +  \frac{1}{h^2}\| p \|_{0, \Omega_h^{\Gamma}} \| \phi_h
     - \phi \|_{\infty}  
    \\
   \leq Ch^{k} (\| y \|_{0, \Omega_h^{\Gamma}} + \| p \|_{0,  \Omega_h^{\Gamma}} ) 
    \leq Ch^{k}\| f \|_{k, \Omega}  
  \end{multline*}
  by regularity estimates (\ref{yreg}).
  Note that the optimal order is achieved here since $\phi$ is assumed of regularity $C^{k+2}$  and it is approximated by  finite elements of degree at least $k + 1$.

Combining the estimate for the terms $I$--$III$ leads to
  $$
  \interleave u_h - I_h  \tilde{u}, y_h - I_h y, p_h - I_h p
     \interleave_h
      \leq
      Ch^{k} \| f \|_{k, \Omega\cup\Omega_h} ,
  $$
so that, by the triangle inequality together with interpolation estimate, we get 
  \begin{equation}\label{ApriTri}
  \interleave u_h - \tilde{u}, y_h - y, p_h - p \interleave_h
      \leq
      Ch^{k} \| f \|_{k, \Omega\cup\Omega_h}  .
  \end{equation}
  This implies the announced $H^1$ error estimate for $u-u_h$.

\subsection{Proof of the $L^2$ error estimate in Theorem \ref{th:error}}

Since $\Omega\subset\Omega_h$, we can
introduce $w : \Omega \to \mathbb{R}$ such that
\begin{equation*} 
- \Delta w +w= u - u_h  \text{ in } \Omega,\quad
\frac{\partial w}{\partial n} = 0 \text{ on } \Gamma.
\end{equation*}
By elliptic regularity, $\|w\|_{2, \Omega} \leq C \|u - u_h \|_{0, \Omega}$.
Let $\tilde{w}$ be an extension of $w$ from $\Omega$ to $\Omega_h$
preserving the $H^2$ norm estimate and set $w_h = I_h  \tilde{w}$. We
observe
\begin{multline*}
\|u - u_h \|_{0, \Omega}^2 = \int_{\Omega} \nabla (u -u_h) \cdot \nabla (w - w_h)+\int_{\Omega} (u - u_h) (w - w_h)
+ \int_{\Omega} \nabla (u - u_h) \cdot \nabla w_h\\+ \int_{\Omega}  (u - u_h) w_h
\le  Ch^{k+1} \|f\|_{k,\Omega_h}  
|\tilde w  |_{2, \Omega_h} 
+ \left| \int_{\Omega} \nabla (u - u_h) \cdot \nabla w_h
+ \int_{\Omega}  (u - u_h) w_h\right|
\end{multline*}
by the already proven $H^1$ error estimate and interpolation estimates for $I_h  \tilde{w}$ (recall also $\Omega\subset\Omega_h$).
Taking $v_h=w_h$, $z_h=0$ and $q_h=0$ in the Galerkin orthogonality relation \eqref{Rob:GO},
we obtain,
thanks to (\ref{altah}),
\begin{multline*}
\int_{\Omega_h} \nabla (\tilde{u} - u_h) \cdot
\nabla w_h + \int_{\Omega_h} (\tilde{u} - u_h)w_h + \int_{B_h} (w_h \Div (y-y_h) + (y-y_h) \cdot \nabla w_h) \\
+\int_{\Gamma_h} \frac{1}{| \nabla \phi_h |} ((y-y_h) \cdot \nabla
\phi_h + \frac{1}{h}(p-p_h) \phi_h) w_h
+\gamma_{div}\int_{\Omega_h^{\Gamma}}(\Div(y-y_h)+\tilde u -u_h)w_h \\
+ \gamma_u  \int_{\Omega_h^{\Gamma}} ((y-y_h) + \nabla (\tilde u -u_h)) \cdot \nabla w_h
+  \sigma h \int_{\Gamma_h^i} \left[ \ddn{(\tilde u -u_h)} \right] \left[ \ddn{w_h} \right]  = (1+\gamma_{div})\int_{\Omega_h} (\tilde{f} - f) w_h.
\end{multline*}
Using the last relation in the bound for $\|u - u_h \|_{0, \Omega}^2$, we can
further bound it as  
\begin{multline*} 
\|u - u_h \|_{0, \Omega}^2 \leqslant  \ Ch^{k + 1}  \|f\|_{k, \Omega_h}  | \tilde{w} |_{2, \Omega_h}+   \left| \int_{\Omega_h \setminus \Omega} \nabla (\tilde{u} - u_h) \cdot
\nabla w_h + \int_{\Omega_h \setminus \Omega} (\tilde{u} - u_h) w_h \right|\\
+ \left| \int_{B_h} (w_h \Div (y - y_h) + (y - y_h) \cdot \nabla w_h)
\right| 
+ \left| \int_{\Gamma_h} \frac{1}{| \nabla \phi_h |} ((y - y_h)
\cdot \nabla \phi_h + \frac{1}{h}(p - p_h) \phi_h) w_h \right|\\ + \left| \gamma_{div} 
\int_{\Omega_h^{\Gamma}} (\Div (y - y_h) + \tilde{u} - u_h) w_h \right| 
+ \left| \gamma_u  \int_{\Omega_h^{\Gamma}} ((y - y_h) + \nabla (\tilde{u} -
u_h)) \cdot \nabla w_h \right|\\ + \left| \sigma h \int_{\Gamma_h^i} \left[
\ddn{(\tilde{u} - u_h)} \right] \left[ \ddn{w_h} \right] \right| 
+ (1 + \gamma_{div}) \left| \int_{\Omega_h} (\tilde{f} - f) w_h \right|  \\
\leqslant \ Ch^{k + 1}  \|f\|_{k, \Omega_h}  | \tilde{w} |_{2, \Omega_h} 
+ C \interleave \tilde{u} -   u_h, y - y_h, p - p_h \interleave_h 
\times\left( \| w_h \|_{1, \Omega_h
	\setminus \Omega} \right.\\\left.+ \| w_h \|_{1, \Omega_h^{\Gamma}} + h \| w_h \|_{0,
	\Gamma_h} + \sqrt{h} \|[\nabla w_h]\|_{0, \Gamma_h^i} \right) 
+ C \| \tilde{f} - f \|_{0, \Omega_h \setminus \Omega}  \| w_h \|_{1, \Omega_h
	\setminus \Omega} .
\end{multline*}
It remains to bound different norms of $w_h$ featuring in the estimate above. 
By Lemma \ref{Prelim1} and interpolation estimates
\begin{equation*}
\|w_h \|_{0, \Omega_h \setminus \Omega} \leq \| \tilde{w} - I_h  \tilde{w}
\|_{0, \Omega_h \setminus \Omega} + \| \tilde{w} \|_{0, \Omega_h \setminus
	\Omega}
 \leq Ch^2 | \tilde{w} |_{2, \Omega_h \setminus \Omega} + C \left( \sqrt{h}
\| \tilde{w} \|_{0, \Gamma} + h| \tilde{w} |_{1, \Omega_h \setminus \Omega}
\right) \leq C \sqrt{h} \| \tilde{w} \|_{2, \Omega_h}.
\end{equation*}
Similarly,
\begin{multline*}
\| \nabla w_h \|_{0, \Omega_h \setminus \Omega} \leq \| \nabla (\tilde{w} -
I_h  \tilde{w})\|_{0, \Omega_h \setminus \Omega} + \| \nabla \tilde{w}
\|_{0, \Omega_h \setminus \Omega}\\
 \leq Ch | \tilde{w} |_{2, \Omega_h \setminus \Omega} + C \left(
\sqrt{h} \| \nabla \tilde{w} \|_{0, \Gamma} + h| \nabla \tilde{w} |_{1,
	\Omega_h \setminus \Omega} \right) \leq C \sqrt{h} \| \tilde{w} \|_{2,
	\Omega_h} .
\end{multline*}
Analogous estimates also hold for $\|w_h \|_{1, \Omega_h^{\Gamma}}$. Moreover, by interpolation estimates,
$$ \|[\nabla w_h]\|_{0, \Gamma_h^i} = \|[\nabla (\tilde{w} - I_h \tilde{w})]\|_{0,
	\Gamma_h^i} \leqslant C \sqrt{h} | \tilde{w} |_{2, \Omega_h} $$
and, by Lemma \ref{Prelim3},
$$h \|w_h \|_{0, \Gamma_h} \leq C \sqrt{h} \|w_h \|_{0, \Omega_h^{\Gamma}}
\leq C \sqrt{h} \| \tilde{w} \|_{2, \Omega_h}
.$$ Hence,
\begin{align*}
\|u - u_h \|_{0, \Omega}^2 & \leq Ch^{k + 1}  \|f\|_{k, \Omega_h} 
| \tilde{w} |_{2, \Omega_h} + C
\sqrt{h} (\interleave \tilde{u} - u_h, y - y_h, p - p_h
\interleave _h+ \| \tilde{f} - f \|_{0, \Omega_h \setminus \Omega} ) \|
\tilde{w} \|_{2, \Omega_h} .
\end{align*}
This implies, by (\ref{ftildef}) and (\ref{ApriTri}),
$
\|u - u_h \|_{0, \Omega}^2  \leq Ch^{k + \frac 12}  \|f\|_{k, \Omega_h} \| \tilde{w} \|_{2, \Omega_h} 
$, 
which entails the announced error estimate in $L^2 (\Omega)$ since $ \| \tilde{w} \|_{2, \Omega_h} \le C\|u - u_h \|_{0, \Omega}$.

\section{Conditioning}

We are now going to prove that the condition number of the finite element
matrix associated to the bilinear form $a_h$ is of order $1 / h^2$.

\begin{theorem}
	Under Assumptions \ref{asm0}--\ref{asm2} and recalling that the mesh $\Th$ is quasi-uniform, the condition number defined by
	${\kappa} (\mathbf{A}) := \| \mathbf{A} \|_2 \|
	\mathbf{A}^{- 1} \|_2$ of the matrix $\mathbf{A}$ associated to the
	bilinear form $a_h$ on $W_h^{(k)}$ satisfies
	$ \kappa (\mathbf{A}) \le Ch^{- 2}$.
	Here, $\| \cdot \|_2$ stands for the matrix norm associated to the vector
	2-norm $| \cdot |_2$.
\end{theorem}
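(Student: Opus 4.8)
The plan is to follow the standard strategy for bounding the condition number of finite element matrices on quasi-uniform meshes, combining a spectral equivalence between the vector $2$-norm of coefficient vectors and the natural $L^2$-type norm on the discrete space, with the coercivity and continuity of $a_h$ measured in the triple-bar norm $\triple{\cdot}_h$. Write $\mathbf{U}=(\mathbf{u},\mathbf{y},\mathbf{p})$ for a generic coefficient vector and $U_h=(u_h,y_h,p_h)\in W_h^{(k)}$ the associated finite element function. The argument splits into three ingredients: (i) a two-sided bound relating $|\mathbf{U}|_2$ to a weighted $L^2$ norm of $U_h$; (ii) an upper bound $\|\mathbf{A}\mathbf{U}\|_2\le C h^{d-2}|\mathbf{U}|_2$ (or equivalently a bound on the largest eigenvalue), obtained from continuity of $a_h$ together with inverse inequalities; and (iii) a lower bound $\mathbf{U}^T\mathbf{A}\mathbf{U}\ge c h^{d}|\mathbf{U}|_2^2$, i.e.\ a bound on the smallest eigenvalue, obtained from the coercivity estimate of Proposition \ref{LemRob:coer} together with the observation that $\triple{U_h}_h^2$ controls the full $L^2$ norm of $(u_h,y_h,p_h)$ up to mesh-dependent factors; then $\kappa(\mathbf{A})\le \|\mathbf{A}\|_2\|\mathbf{A}^{-1}\|_2 \le (Ch^{d-2})/(ch^{d}) = Ch^{-2}$.

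More concretely, for step (i) I would use the standard fact that on a quasi-uniform mesh the mass matrix (for $\mathbb{P}_k$ elements, scalar or vector-valued) is spectrally equivalent to $h^d$ times the identity, so that $c h^d|\mathbf{u}|_2^2\le \|u_h\|_{0,\Omega_h}^2\le Ch^d|\mathbf{u}|_2^2$ and similarly $c h^d|\mathbf{y}|_2^2\le \|y_h\|_{0,\Omega_h^\Gamma}^2\le Ch^d|\mathbf{y}|_2^2$ and likewise for $p_h$ and $\mathbf{p}$ on $\Omega_h^\Gamma$. Hence $|\mathbf{U}|_2^2$ is equivalent to $h^{-d}\big(\|u_h\|_{0,\Omega_h}^2+\|y_h\|_{0,\Omega_h^\Gamma}^2+\|p_h\|_{0,\Omega_h^\Gamma}^2\big)$; call the parenthesized quantity $N(U_h)^2$. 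For the upper bound (ii), the key point is that every term in $a_h(U_h;V_h)$ can be estimated, using Cauchy--Schwarz and inverse inequalities on each mesh element, by $C h^{-2} N(U_h) N(V_h)$: the stiffness term costs $h^{-2}$ via the inverse inequality $|v_h|_{1}\le Ch^{-1}\|v_h\|_0$; the boundary term $\int_{\partial\Omega_h}y\cdot nv$ and the $\Gamma_h$ terms cost an extra $h^{-1}$ from the trace inverse inequality of Lemma \ref{Prelim3} (and note $\|z_h\cdot\nabla\phi_h+\frac1h q_h\phi_h\|_0\le C N(U_h)$ since $\nabla\phi_h$ and $\frac1h\phi_h$ are bounded on $\Omega_h^\Gamma$); the $\gamma_u$, $\gamma_{div}$ and ghost-penalty terms each contribute at most $h^{-2}$; and the $\gamma_p/h^2$ term is explicitly $h^{-2}$. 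Since $\mathbf{V}^T\mathbf{A}\mathbf{U}=a_h(U_h;V_h)$, taking the supremum over $\mathbf{V}$ with $|\mathbf{V}|_2=1$ and using (i) gives $\|\mathbf{A}\mathbf{U}\|_2\le Ch^{d-2}|\mathbf{U}|_2$.

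For the lower bound (iii), by Proposition \ref{LemRob:coer} we have $\mathbf{U}^T\mathbf{A}\mathbf{U}=a_h(U_h;U_h)\ge c\triple{U_h}_h^2 \ge c\|u_h\|_{1,\Omega_h}^2\ge c\|u_h\|_{0,\Omega_h}^2$, so the $u$-part is immediately controlled. The remaining task is to bound $\|y_h\|_{0,\Omega_h^\Gamma}^2$ and $\|p_h\|_{0,\Omega_h^\Gamma}^2$ from above by (a mesh-dependent multiple of) $\triple{U_h}_h^2$. From the term $\|z+\nabla v\|_{0,\Omega_h^\Gamma}^2$ in the triple-bar norm and $\|u_h\|_{1,\Omega_h}$ already controlled, one gets $\|y_h\|_{0,\Omega_h^\Gamma}\le \|y_h+\nabla u_h\|_{0,\Omega_h^\Gamma}+\|\nabla u_h\|_{0,\Omega_h^\Gamma}\le C\triple{U_h}_h$; then from the term $\frac{1}{h^2}\|y_h\cdot\nabla\phi_h+\frac1h p_h\phi_h\|_{0,\Omega_h^\Gamma}^2$, the triangle inequality gives $\frac1h\|\frac1h p_h\phi_h\|_{0,\Omega_h^\Gamma}\le \frac1h\|y_h\cdot\nabla\phi_h\|_{0,\Omega_h^\Gamma}+h\triple{U_h}_h\le C\triple{U_h}_h$, i.e.\ $\|\frac{1}{h}p_h\phi_h\|_{0,\Omega_h^\Gamma}\le Ch\triple{U_h}_h$. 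The delicate point here — and I expect this to be the main obstacle — is to pass from control of $\|p_h\phi_h\|_{0,\Omega_h^\Gamma}$ to control of $\|p_h\|_{0,\Omega_h^\Gamma}$, because $\phi_h$ can vanish somewhere inside $\Omega_h^\Gamma$ (indeed $\Gamma_h\subset\Omega_h^\Gamma$), so $\phi_h$ is not bounded below there. This must be resolved exactly as in the conditioning analysis for the Dirichlet $\phi$-FEM: on each patch $\Pi_k$ one uses Assumption \ref{asm2} and a discrete argument — $p_h$ is a polynomial of bounded degree on each cell, $|\nabla\phi_h|\ge m/2$ so $\phi_h$ behaves like a coordinate, and a scaling/finite-dimensionality argument on the reference patch shows $\|p_h\|_{0,\Pi_k^\Gamma}\le C(\|p_h\phi_h\|_{0,\Pi_k^\Gamma}/h + \text{lower order contributions absorbed via the other norm terms})$, hence $\|p_h\|_{0,\Omega_h^\Gamma}\le C h^{-1}\cdot h\triple{U_h}_h\cdot\tfrac1{?}$ — one tracks the $h$ powers carefully to conclude $\|p_h\|_{0,\Omega_h^\Gamma}\le C\triple{U_h}_h$ (this is precisely why the $\frac1h$ rescaling of $p$ was built into $a_h$). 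Combining, $N(U_h)^2\le C\triple{U_h}_h^2\le \frac{C}{c}\,\mathbf{U}^T\mathbf{A}\mathbf{U}$, so with (i) we get $\mathbf{U}^T\mathbf{A}\mathbf{U}\ge c h^d|\mathbf{U}|_2^2$, whence $\|\mathbf{A}^{-1}\|_2\le Ch^{-d}$ (using that $\mathbf{A}$ is, if not symmetric, at least positive definite in the sense that its symmetric part is coercive — one bounds $\|\mathbf{A}^{-1}\mathbf{F}\|_2$ via $\mathbf{F}^T\mathbf{A}^{-1}\mathbf{F}\ge c\|\mathbf{A}^{-1}\mathbf{F}\|_2^2 h^d$). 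Together with $\|\mathbf{A}\|_2\le Ch^{d-2}$ this yields $\kappa(\mathbf{A})\le Ch^{-2}$, as claimed.
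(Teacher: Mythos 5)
Your proposal is correct and follows essentially the same route as the paper: spectral equivalence of $|\mathbf{U}|_2$ with $h^{-d/2}$ times the $L^2$-type norm, an $O(h^{-2})$ continuity bound via inverse inequalities, and a lower bound obtained by upgrading the coercivity of Proposition \ref{LemRob:coer} in the norm $\triple{\cdot}_h$ to coercivity in the plain $L^2$ norm of $(u_h,y_h,p_h)$, extracting $\|y_h\|_0$ and $\|p_h\|_0$ exactly as you describe. The one ingredient you only sketch --- passing from $\|p_h\phi_h\|_{0,\Omega_h^\Gamma}$ to $\|p_h\|_{0,\Omega_h^\Gamma}$ --- is precisely the paper's Step 1, the inverse estimate $\|q_h\phi_h\|_{0,\Omega_h^\Gamma}\ge Ch\|q_h\|_{0,\Omega_h^\Gamma}$, proved by the element-wise scaling/compactness argument you anticipate (using that $\phi_h$ vanishes somewhere on each $T\in\Th^\Gamma$ and $|\nabla\phi_h|\ge m/2$ there), so your identification of the crux and its resolution is accurate even though your intermediate powers of $h$ in the $p_h$ bookkeeping are slightly garbled.
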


\begin{proof}
	The proof is divided into 4 steps:
	
	\textbf{Step 1.}
	We shall prove for all $q_h \in Q_h^{(k)}$
	\begin{equation}\label{eq:q_h}
	\|q_h \phi_h \|_{0, \Omega_h^{\Gamma}} \ge Ch \|q_h \|_{0,
		\Omega_h^{\Gamma}} .
	\end{equation}
	We have
	\begin{equation}
	\label{maxqhfh} \min_{T,q_h \neq 0, \phi_h \neq 0} \frac{\| q_h \phi_h \|_{0,
			T}}{h_T\| q_h \|_{0, T} \| \nabla \phi_h \|_{\infty, T}} \geqslant C,
	\end{equation}
	where the minimum is taken over all simplexes $T$ with $h_T=\textrm{diam}\,(T)$ satisfying the regularity assumptions and all polynomials $q_h$ of degree $\leqslant k$
	and $\phi_h$ of degree $\leqslant l$, with $\phi_h$ vanishing at at least one
	point on $T$. Note that this excludes $\| \nabla \phi_h \|_{\infty, T} = 0$
	because $\phi_h$ would then vanish identically on $T$. The
	minimum in (\ref{maxqhfh}) is indeed attained since, by homogeneity, it can be
	taken over the compact set $\| q_h \|_{0, T} = \| \nabla \phi_h \|_{\infty, T}
	= 1$ and simplexes with $h_T=1$. Hence, (\ref{maxqhfh}) is valid with some $C>0$. Applying (\ref{maxqhfh}) on any mesh element $T \in
	\mathcal{T}_h^{\Gamma}$ to any $q_h \in Q_h^{(k)}$ and $\phi_h$ approximation
	to $\phi$ satisfying Assumption \ref{asm1} leads to
	$ \| q_h \phi_h \|_{0, T} \geqslant Ch_T \frac{m}{2} \| q_h \|_{0, T} $. 
	Taking the square on both sides and summing over all $T \in	\mathcal{T}_h^{\Gamma}$ yields (\ref{eq:q_h}).
	
	\textbf{Step 2.}
	We shall prove for all $(v_h, z_h, q_h) \in W_h^{(k)}$
	\begin{eqnarray}
	a_h (v_h, z_h, q_h ; v_h, z_h, q_h) \ge c\|v_h, z_h, q_h \|_0^2
	\label{lemma:coer ah}
	\end{eqnarray}
	with $\|v_h, z_h, q_h \|_0^2 = \|v_h \|_{0, \Omega_h}^2 + \|z_h
	\|_{0, \Omega_h^{\Gamma}}^2 +  \|q_h \|_{0, \Omega_h^{\Gamma}}^2$.
	Indeed, by Lemma \ref{LemRob:coer},
	\begin{equation*}
	a_h (v_h, z_h, q_h ; v_h, z_h, q_h) \geq c \triple{v_h, z_h, q_h}_h^2 
	\geq c (| |v_h | |_{1, \Omega_h}^2 +\|z_h + \nabla v_h \|^2_{0,
		\Omega_h^{\Gamma}} +\|z_h \cdot \nabla \phi_h + \frac{1}{h}q_h \phi_h \|_{0,\Omega_h^{\Gamma}}^2).
	\end{equation*}
	We have assumed here (without loss of generality) $h \leq 1$.
	By Young's inequality with any $\epsilon_1 \in (0, 1)$,
	\begin{equation}\label{eq:z_h}
	\|z_h + \nabla v_h \|^2_{0, \Omega_h^{\Gamma}} = \|z_h \|^2_{0, \Omega_h^{\Gamma}} + \| \nabla v_h \|^2_{0,\Omega_h^{\Gamma}} + 2 (z_h, \nabla v_h)_{0, \Omega_h^{\Gamma}}
	 \ge (1 - \epsilon_1) \|z_h
	\|^2_{0, \Omega_h^{\Gamma}} - \frac{1 - \epsilon_1}{\epsilon_1}  \| \nabla
	v_h \|^2_{0, \Omega_h^{\Gamma}}.
	\end{equation}
	Similarly, for any $\epsilon_2 \in (0, 1)$, using that $\nabla \phi_h$ is
	uniformly bounded,
	\begin{equation}\label{eq:z_h2}
	\|z_h \cdot \nabla \phi_h +\frac{1}{h} q_h \phi_h \|_{0, \Omega_h^{\Gamma}}^2 \ge
	\frac{1 - \epsilon_2}{h^2}\|\phi_hq_h \|^2_{0, \Omega_h^{\Gamma}} - C \frac{1 -
		\epsilon_2}{\epsilon_2}  \|z_h \|^2_{0, \Omega_h^{\Gamma}} .
	\end{equation}
	Thus, combining \eqref{eq:z_h}, \eqref{eq:z_h2} and \eqref{eq:q_h},
	\begin{multline*} a_h (v_h, z_h, q_h ; v_h, z_h, q_h) \\\geq c \left( \left( 1 - \frac{1
		- \epsilon_1}{\epsilon_1} \right) | |v_h | |_{1, \Omega_h}^2 + \left( 1 -
	\epsilon_1 - C \frac{1 - \epsilon_2}{\epsilon_2} \right) \|z_h \|^2_{0,
		\Omega_h^{\Gamma}} + C(1 - \epsilon_2)  \|q_h \|^2_{0,
		\Omega_h^{\Gamma}} \right) .
	\end{multline*}
	Taking $\epsilon_1, \epsilon_2$ close to 1, we get (\ref{lemma:coer ah}).
	
	\
	
	\textbf{Step 3.} We shall prove for all $(v_h, z_h, q_h) \in W_h^{(k)}$
	\begin{eqnarray}
	a_h (v_h, z_h, q_h ; v_h, z_h, q_h) \leq \frac{C}{h^2} \|v_h, z_h,
	q_h \|^2_0 . \label{lemma:cont ah}
	\end{eqnarray}
	By definition of $a_h$ (\ref{altah}) and Cauchy-Schwarz inequality,
	\begin{multline*}
	a_h (v_h, z_h, q_h ; v_h, z_h, q_h) \\\leq C \left( \|v_h \|^2_{1,
		\Omega_h} +\|z_h \|^2_{1, \Omega_h^{\Gamma}}
	+ h\left\|\left[\ddn{v_h}\right] \right\|^2_{0, \Gamma_h^i} 
	  + \frac{1}{h^2} \|z_h \cdot
	\nabla \phi_h \|_{0, \Omega_h^{\Gamma}}^2 + 
	\frac{1}{h^4} \|q_h \phi_h\|_{0, \Omega_h^{\Gamma}}^2 \right)  \,.
	\end{multline*}
	This leads to (\ref{lemma:cont ah}) thanks to inverse inequalities and to the fact that both $\nabla\phi_h$ and $\frac{1}{h}\phi_h$ are uniformly bounded on $\Omega_h^\Gamma$.
	
	\textbf{Step 4.} We combine (\ref{lemma:coer ah}) and (\ref{lemma:cont ah}), and observe that the norm $\|v_h, z_h, q_h \|_0$ is equivalent to the 2-norm of the vector representing $(v_h, z_h, q_h)$. This leads to the desired result as at the end of the proof of Theorem 4.1 from \cite{phiFEM}.
\end{proof}

\section{Numerical simulations}\label{sec:num}

In this section, we illustrate $\phi$-FEM on three different test cases, cf. Fig. \ref{fig:simu domains}, exploring the errors with respect to exact ``manufactured"  solutions.
The numerical results for the 1st test case (in 2D) confirm the predicted theoretical estimates (in fact, better than theoretically predicted convergence rate is observed for the $L^2$ error). In the 2nd test case (also in 2D), we show that the optimal convergence is recovered even 
  when the level-set function $\phi$ is less regular than assumed by the theory. Our method is also compared with CutFEM \cite{burman2} in the last case. Finally, a 3D example is given in the 3rd test case.

In the first test case, we will treat some examples with non-homogeneous Neumann condition (Fig. 2-8) and Robin condition (Fig. 9) thanks to the modification of the scheme given in Remark \ref{nonhom}.
In the two last test cases, we will consider homogeneous Neumann conditions.

  The surrounding domains $\mathcal{O}$ are always chosen as boxes aligned with the Cartesian coordinates and the background meshes $\Th^\mathcal{O}$ are obtained from uniform Cartesian grids, dividing the cells into the simplexes  (semi-cross meshes in 2D) . We always use the numerical  quadrature of a high enough order so that all the integrals in (\ref{Neu:Ph}) are computed exactly.

We have implemented $\phi$-FEM both in \texttt{FreeFEM} \cite{freefem} and in \texttt{multiphenics} \cite{multiphenics}. Both implementations give the same results in our 2D test cases and we present here only those obtained with \texttt{FreeFEM}. Unfortunately, it is not possible to fully implement $\phi$-FEM in 3D using \texttt{FreeFEM} since it does not provide the  tools for the jumps on inter-element faces. That is why the numerical results for our 3rd test case were produced using  \texttt{multiphenics} only. The implementation scripts can be consulted on GitHub.\footnote{\url{https://github.com/michelduprez/PhiFEM-Neumann}}
\begin{remark}
 Our method (\ref{Neu:Ph}) features ``mixed'' terms, such as $\gamma_{1} \int_{\Omega_h^{\Gamma}} (y_h + \nabla u_h) \cdot (z_h + \nabla v_h)$ involving $u_h,v_h$ defined on mesh $\Th$ and $y_h,z_h$ defined on mesh $\Th^{\Gamma}$, a submesh of $\Th$. Such integrals cannot be implemented in the current version of \texttt{FEniCS} since it requires all the finite elements involved in a problem to be defined on the same mesh. This is why we have turned to the \texttt{multiphenics} library, a spin-off of \texttt{FEniCS}, that does not have such a restriction. On the other hand,  \texttt{FreeFEM} features interpolations between meshes in a user-friendly manner. However, we have discovered that a straightforward implementation of (\ref{Neu:Ph}) in \texttt{FreeFEM} involving an implicit interpolation from $\Th$ to $\Th^\Gamma$ can lead to some spurious oscillations in the error curves. Much better results (reported below) are obtained if we introduce explicitly the interpolation matrix from $V_h^{(k)}$ to its restriction on $\Th^\Gamma$, using the \texttt{FreeFEM} function \texttt{interpolate}. 
\end{remark}

\begin{figure}[H]
\begin{center}
\includegraphics[width=5cm]{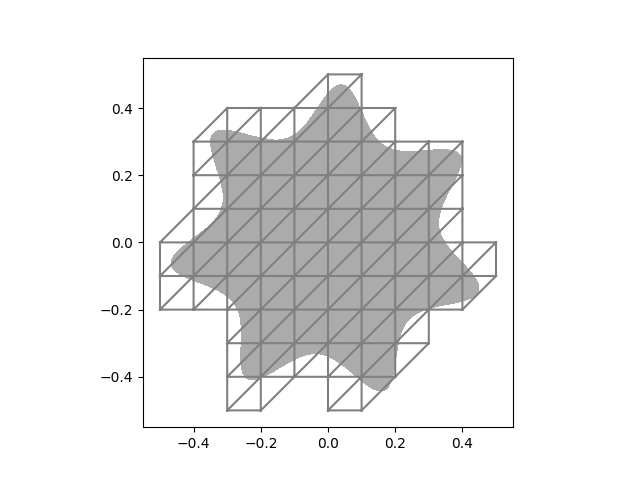} 
\includegraphics[width=5cm]{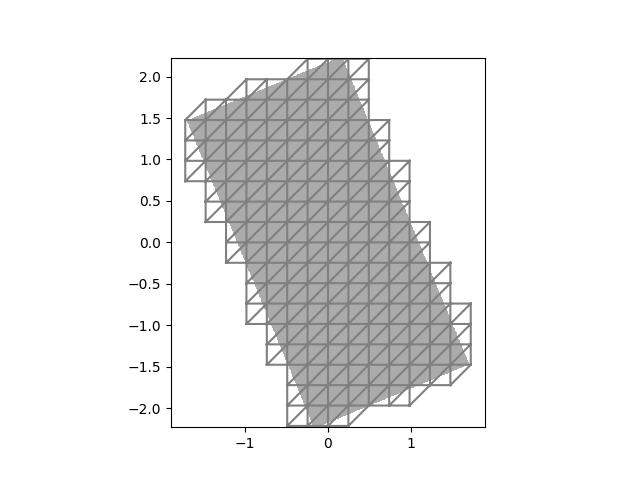} 
\includegraphics[width=5cm]{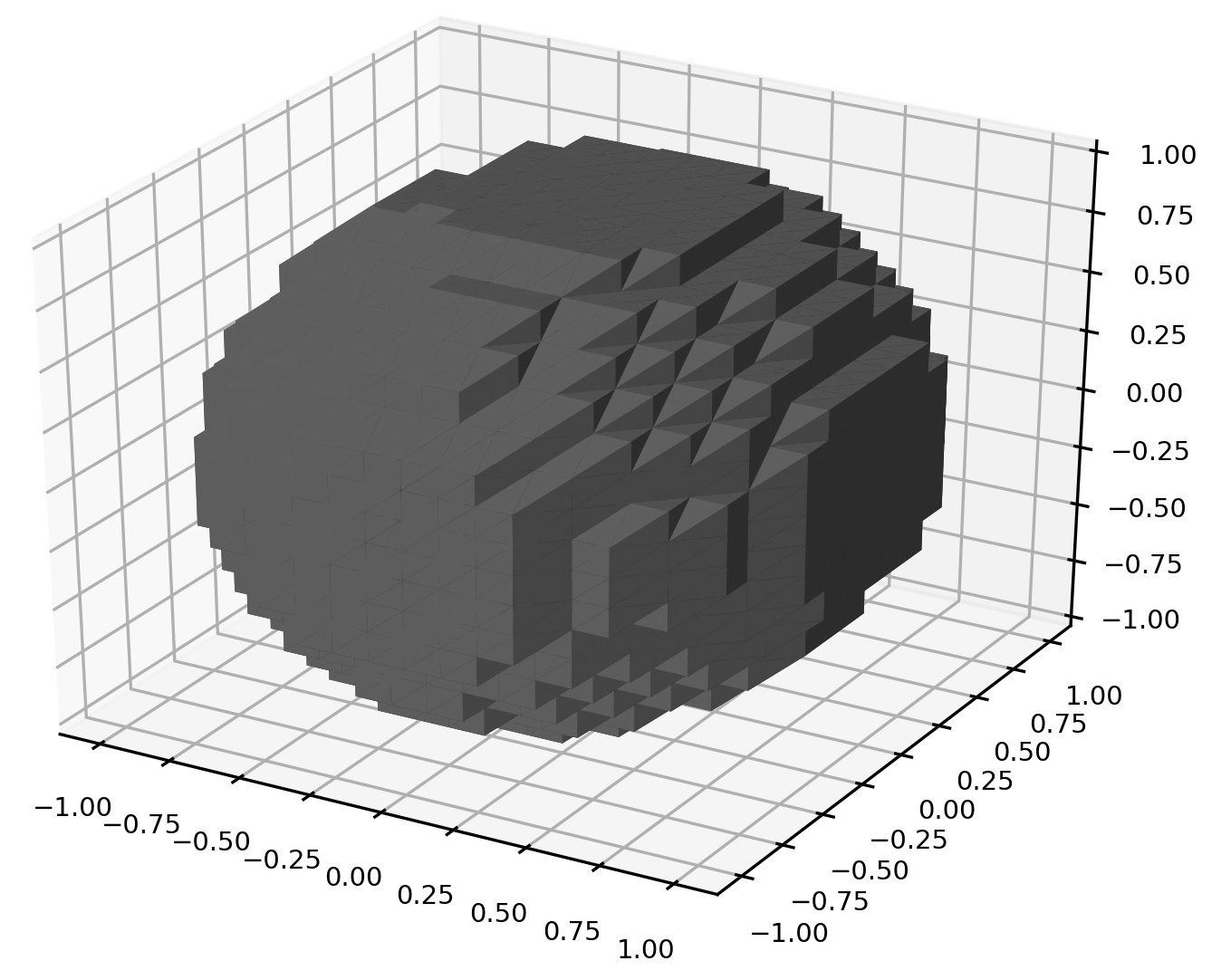} 
\caption{Domains and meshes considered in $\phi$-FEM for the test case 1 (left), test case 2 (center) and test case 3 (right). }
\label{fig:simu domains}
\end{center}\end{figure}

\subsection{1st test case}

Domain $\Omega$ (see Fig. \ref{fig:simu domains} left) is defined by the level-set function $\phi$ given in the polar coordinates $(r,\theta)$ by
\begin{equation*}
\phi(r,\theta)= r^4(5 + 3\sin(7(\theta-\theta_0) + 7\pi/36))/2 - R^4,
\end{equation*}
where $R=0.47$ and $\theta_0\in [0,2\pi)$. The surrounding domain $\mathcal{O}$ is fixed to $(-0.5,0.5)^2$. Varying the angle $\theta_0$ results in a rotation of $\Omega$, so that the boundary $\Gamma$ cuts the triangles of the background mesh in a different manner, creating sometimes the ''dangerous'' situations when certain mesh triangles of $\mathcal{T}_h$ have only a tiny portion inside the physical domain $\Omega$.

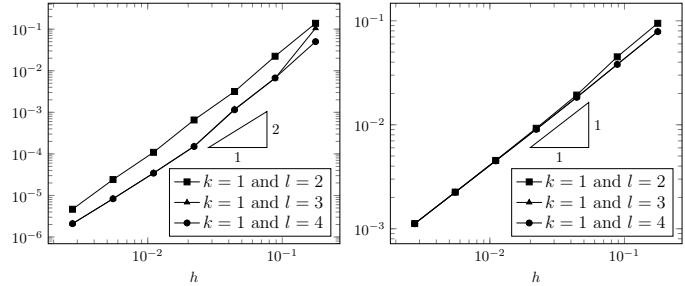
\begin{figure}[H]
\begin{center}
\begin{tikzpicture}[thick,scale=0.56, every node/.style={scale=1.0}]
\begin{loglogaxis}[xlabel=$h$
,legend pos=south east
,legend style={ font=\large}
, legend columns=1]
 \addplot[
 mark=square*] coordinates {
(0.176777,0.13861)
(0.0883883,0.0223436)
(0.0441942,0.00316592)
(0.0220971,0.000654498)
(0.0110485,0.000109231)
(0.00552427,2.41225e-05)
(0.00276214,4.66225e-06)
};
 \addplot[
 mark=triangle*] coordinates {
(0.176777,0.105681)
(0.0883883,0.00657242)
(0.0441942,0.0011822)
(0.0220971,0.000151746)
(0.0110485,3.45299e-05)
(0.00552427,8.32688e-06)
(0.00276214,2.09697e-06)
 };
 \addplot[
 mark=*] coordinates {
(0.176777,0.0501426)
(0.0883883,0.00670024)
(0.0441942,0.00115167)
(0.0220971,0.000150767)
(0.0110485,3.44098e-05)
(0.00552427,8.32845e-06)
(0.00276214,2.09709e-06)
};

\logLogSlopeTriangle{0.75}{0.2}{0.4}{2}{
black};
 \legend{$k=1$ and $l=2$,$k=1$ and $l=3$,$k=1$ and $l=4$}
\end{loglogaxis}
\end{tikzpicture}
\begin{tikzpicture}[thick,scale=0.56, every node/.style={scale=1.0}]
\begin{loglogaxis}[xlabel=$h$
,legend pos=south east
,legend style={ font=\large}
, legend columns=1]
 \addplot[
 mark=square*] coordinates {
(0.176777,0.0946733)
(0.0883883,0.0450185)
(0.0441942,0.0193182)
(0.0220971,0.00925605)
(0.0110485,0.00454114)
(0.00552427,0.0022504)
(0.00276214,0.00112029)
};
 \addplot[
 mark=triangle*] coordinates {
(0.176777,0.0788302)
(0.0883883,0.0379922)
(0.0441942,0.0183564)
(0.0220971,0.00903373)
(0.0110485,0.0044924)
(0.00552427,0.00223979)
(0.00276214,0.00111832)
 };
 \addplot[
 mark=*] coordinates {
(0.176777,0.078638)
(0.0883883,0.0380902)
(0.0441942,0.01836)
(0.0220971,0.00903369)
(0.0110485,0.00449241)
(0.00552427,0.00223979)
(0.00276214, 0.00111832)
 };

\logLogSlopeTriangle{0.68}{0.2}{0.4}{1}{
black};
 \legend{$k=1$ and $l=2$,$k=1$ and $l=3$,$k=1$ and $l=4$}
\end{loglogaxis}
\end{tikzpicture}
\caption{$\phi$-FEM for the test case 1, 
 $\theta_0=0$,  $\sigma=0.01$ and $\gamma_u=\gamma_p=\gamma_{\mbox{div}}=10$, $k=1$ and different values of $l$. Left: $L^2$  relative error $\|u-u_h\|_{0,\Omega_h^i}/\|u\|_{0,\Omega_h^i}$; Right: $H^1$  relative error $\|u-u_h\|_{1,\Omega_h^i}/\|u\|_{1,\Omega_h^i}$. }
\label{fig:simu test 1 error L2 and H1}
\end{center}\end{figure}

We use $\phi$-FEM to solve numerically Poisson-Neumann problem \eqref{Neu:P} with non-homogeneous boundary conditions $\frac{\partial u}{\partial n}=g$ adjusting $f$ and $g$ so that the exact solution is given by
$u(x,y)=\sin(x)\exp(y).$
The Neumann boundary condition is extrapolated to a vicinity of $\Gamma$ by 
$\tilde g = \frac{\nabla u\cdot\nabla\phi}{|\nabla\phi|}+u \phi$, cf. Remark \ref{nonhom}.
The addition of $u \phi$ here does not perturb $\tilde g$ on $\Gamma$. Its purpose is to mimick the real life situation where $g$ is known on $\Gamma$ only and $\tilde g$ is some extension of $g$, not necessarily the natural one $\nabla u\cdot\nabla\phi/{|\nabla\phi|}$.

\begin{figure}[H]
\begin{center}
\begin{tikzpicture}[thick,scale=0.56, every node/.style={scale=1.0}]
\begin{loglogaxis}[xlabel=$h$
,legend pos=south east
,legend style={ font=\large}
, legend columns=1]
 \addplot[
 mark=triangle*] coordinates {
(0.176777,0.00562683)
(0.0883883,0.00109056)
(0.0441942,3.53413e-05)
(0.0220971,1.3836e-05)
(0.0110485,8.83362e-07)
(0.00552427,9.48722e-08)
(0.00276214,3.52129e-09)

};
 \addplot[
 mark=*] coordinates {
(0.176777,0.00746765)
(0.0883883,0.000119661)
(0.0441942,2.84064e-05)
(0.0220971,1.42383e-06)
(0.0110485,2.35584e-07)
(0.00552427,2.54233e-08)
(0.00276214,5.43098e-10)

 };

\logLogSlopeTriangle{0.63}{0.2}{0.35}{3}{black
};
 \legend{$k=2$ and $l=3$,$k=2$ and $l=4$}
\end{loglogaxis}
\end{tikzpicture}
\begin{tikzpicture}[thick,scale=0.56, every node/.style={scale=1.0}]
\begin{loglogaxis}[xlabel=$h$
,legend pos=south east
,legend style={ font=\large}
, legend columns=1]
 \addplot[
 mark=triangle*] coordinates {
(0.176777,0.0059615)
(0.0883883,0.000800328)
(0.0441942,0.000139711)
(0.0220971,2.4863e-05)
(0.0110485,5.61744e-06)
(0.00552427,1.36194e-06)
(0.00276214,3.29611e-07)

};
 \addplot[
 mark=*] coordinates {
(0.176777,0.0018531)
(0.0883883,0.000393935)
(0.0441942,8.6311e-05)
(0.0220971,2.07939e-05)
(0.0110485,5.13658e-06)
(0.00552427,1.27783e-06)
(0.00276214,3.18646e-07)

 };

\logLogSlopeTriangle{0.63}{0.2}{0.35}{2}{
black};
 \legend{$k=2$ and $l=3$,$k=2$ and $l=4$}
\end{loglogaxis}
\end{tikzpicture}
\caption{$\phi$-FEM for the test case 1, 
 $\theta_0=0$,  $\sigma=0.01$ $\gamma_u=\gamma_p=\gamma_{\mbox{div}}=10$, $k=2$ and different values of $l$. Left: $L^2$  relative error $\|u-u_h\|_{0,\Omega_h^i}/\|u\|_{0,\Omega_h^i}$; Right: $H^1$  relative error $\|u-u_h\|_{1,\Omega_h^i}/\|u\|_{1,\Omega_h^i}$. }
\label{fig:simu test 1 error L2 and H1 P2}
\end{center}\end{figure}
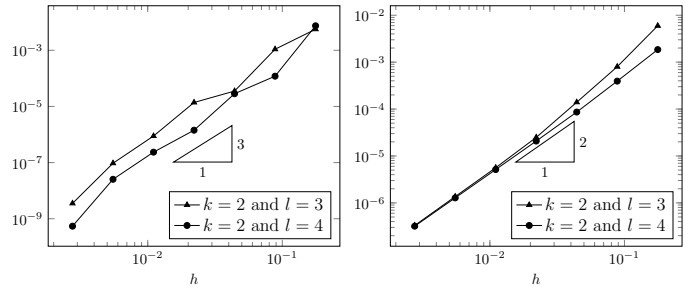

We report at Figs. \ref{fig:simu test 1 error L2 and H1} and \ref{fig:simu test 1 error L2 and H1 P2} the evolution of the relative error under the mesh refinement for a fixed position of $\Omega$ ($\theta_0=0$), using finite element spaces  $W_h^{(k)}$  with $k=1$ ($\mathbb{P}_1$ FE for $u_h$) and $k=2$ ($\mathbb{P}_2$ FE for $u_h$). We also try there different values of $l$, the degree of finite element used to approximate the level-set $\phi$, recalling that it should be chosen as $k+1$ or greater. The experiments reported in these figures confirms the optimal convergence order of the method in  both $H^1$  and $L^2$ norms (orders $k$ and $k+1$ respectively). The convergence order in the $L^2$ norm is thus better than in theory. An interesting experimental observation comes from exploring the degree $l$: while the lowest possible value $l=k+1$ ensures indeed the optimal convergence orders, it seems advantageous to increase the degree to  $l=k+2$, leading to more accurate results, especially in the $L^2$ norm.  Another series of experiments is reported at Figs. \ref{fig:simu test 1 phifem angle k=1 l=2} and \ref{fig:simu test 1 phifem angle k=1 l=3}. We explore there the errors with respect to the rotation of $\Omega$ over the background mesh (varying $\theta_0$). We restrict ourselves here with finite elements degree $k=1$ but compare two  different values of $l$: $l=k+1=2$ at Fig. \ref{fig:simu test 1 phifem angle k=1 l=2} vs. $l=k+2=3$ at Fig. \ref{fig:simu test 1 phifem angle k=1 l=3}. We observe again an advantage of the choice $l=k+2$: the oscillations on any given background mesh become less important when increasing $l$ and fade away under the mesh refinement in the case $l=k+2$ (this concerns mostly the $L^2$ errors; the $H^1$ errors are pretty much the same in both cases). The influence of the parameters $\sigma$, $\gamma_{div}$, $\gamma_u$, $\gamma_p$ on the accuracy of the method is explored by the numerical experiments reported at Figs. \ref{fig:simu test 1 phifem sensitivity sigma gamma} and \ref{fig:simu test 1 phifem sensitivity gamma}. Although a full assessment of the role of all the 4 parameters is difficult (we have chosen, somewhat arbitrarily, two scenarios of parameter variations out of endless other possibilities), the conclusion of our numerical experiments seems clear: the method is not sensible to variation of the parameters in the wide range from $10^{-6}$ to $10$, and there is no need to take these parameters greater than $10$.  Finally, we report at Fig. \ref{fig:simu test 1 phifem cond} evolution of the condition number of the $\phi$-FEM matrix under the mesh refinement and also its sensitivity with respect to the rotations of $\Omega$. The theoretically predicted behaviour of $\sim 1/h^2$ is confirmed. The conditioning of the method is also found to be rather insensitive to the position of $\Omega$ over the mesh.

\begin{figure}[H]
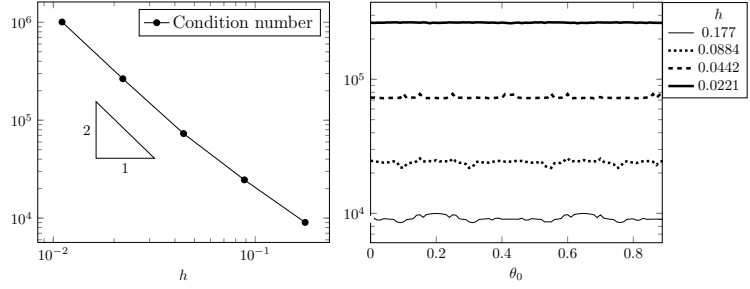

\begin{center}

\caption{Condition number in $\phi$-FEM for the test case 1, 
 $\sigma=0.01$, $\gamma_u=\gamma_p=\gamma_{\mbox{div}}=10$, $\theta_0=0$,  $k=1$ and $l=k+2$. Left: $\theta_0=0$; Right: different values of $\theta_0$. }
\label{fig:simu test 1 phifem cond}
\end{center}\end{figure}


We end this section by given an example with Robin boundary condition with $\alpha=1$ thanks to modification of main scheme presented in Remark \ref{nonhom}. We consider the same domain $\Omega$, level-set function $\phi$ and solution $u$ as before. The Robin condition is extrapolated  
by $\tilde g = \frac{\nabla u\cdot\nabla\phi}{|\nabla\phi|}+\alpha u + u \phi$.
In Fig. \ref{fig:simu test robin error L2 and H1}, we report the $L^2$ errors and the $H^1$ error (left) and the condition number (right) for $k=1$ and $l=3$. 
 We observe that optimal convergence order and standard condition number remain valid
for our Robin formulation.

\begin{figure}[H]
\begin{center}
\begin{tikzpicture}[thick,scale=0.56, every node/.style={scale=1.0}]
\begin{loglogaxis}[xlabel=$h$
,legend pos=south east
,legend style={ font=\large}
, legend columns=1]
 \addplot[
 mark=triangle*] coordinates {
(0.176776695297,0.0453178038217)
(0.0883883476483,0.0258891448454)
(0.0441941738242,0.004118084539)
(0.0220970869121,0.000658531106076)
(0.011048543456,7.83441097303e-05)
(0.00552427172802,1.13554704779e-05)
(0.00276213586401,2.11689786411e-06)
};
 \addplot[
 mark=*] coordinates {
(0.176776695297,0.0817286149115)
(0.0883883476483,0.0385722426266)
(0.0441941738242,0.0184503004745)
(0.0220970869121,0.00903973109283)
(0.011048543456,0.00449221857781)
(0.00552427172802,0.00223973273223)
(0.00276213586401,0.00111829854053)
 };

\logLogSlopeTriangle{0.33}{0.2}{0.45}{1}{
black};
\logLogSlopeTriangle{0.63}{0.2}{0.35}{2}{
black};
 \legend{$\|u-u_h\|_{0,\Omega_h^i}/\|u\|_{0,\Omega_h^i}$,$\|u-u_h\|_{1,\Omega_h^i}/\|u\|_{1,\Omega_h^i}$}
\end{loglogaxis}
\end{tikzpicture}
\begin{tikzpicture}[thick,scale=0.56, every node/.style={scale=1.0}]
\begin{loglogaxis}[xlabel=$h$,xmax=1e0,
ytick={1e4,2e4},xtick={2e-1,1e0},
yticklabels={$10^{4}$,$2\times 10^{4}$},
xticklabels={$2\times 10^{-1}$,$10^{0}$}
,legend pos=north east
,legend style={ font=\large}
, legend columns=1]
 \addplot[
 mark=*] coordinates {
(0.176776695297,6498.80303698)
(0.0883883476483,18844.875535)
(0.0441941738242,52558.0180143)
(0.0220970869121,180063.682267)
(0.011048543456,683797.879967)

};

 \logLogSlopeTriangleinv{0.3}{0.2}{0.15}{2}{
 black};
 \legend{Condition number}
\end{loglogaxis}
\end{tikzpicture}
\caption{$\phi$-FEM for the test case 1 and Robin boundary conditions, 
 $\sigma=0.01$, $\gamma_u=\gamma_p=\gamma_{\mbox{div}}=10$, $\alpha=1$, $k=1$ and $l=3$. Left: $L^2$ relative error $\|u-u_h\|_{0,\Omega_h^i}/\|u\|_{0,\Omega_h^i}$ and  $H^1$ relative error $\|u-u_h\|_{1,\Omega_h^i}/\|u\|_{1,\Omega_h^i}$. ; Right: Condition number.}
\label{fig:simu test robin error L2 and H1}
\end{center}\end{figure}
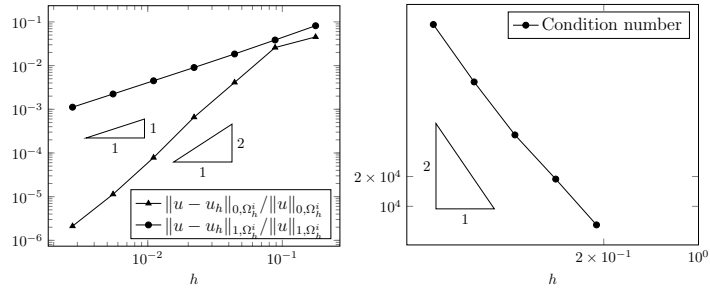

\subsection{2nd test case}
In this test case, the domain $\Omega$ is the rectangle $(-1,1)\times(-2,2)$ rotated by an angle $\theta_0$ counter-clockwise around the origin. It is defined by the level-set function $\phi$ given by
$\phi(x,y)= \Phi\circ \Pi(x,y),$
with
$\Phi(x,y)= \max(|x|,|y|/2)-1$
and $
\Pi\left(\begin{array}{c}
x\\y
\end{array}\right)
=\left(\begin{array}{cc}
\cos(\theta_0)&-\sin(\theta_0)\\
\sin(\theta_0)&\cos(\theta_0)
\end{array}\right)
\left(\begin{array}{c}
x\\y
\end{array}\right).$
The surrounding domain is taken as $\mathcal{O}=(-R,R)^2$, with $R=1.1\sqrt{5}$, cf. Fig. \ref{fig:simu domains} middle.  

We use $\phi$-FEM to solve numerically Poisson-Neumann problem \eqref{Neu:P} with the exact solution given by
$u(x,y)= U\circ \Pi(x,y),$
where
$U(x,y)= \cos(\pi x)\cos(\pi y/2).$

The results are presented at Figs. \ref{fig:simu test 2 error L2 and H1} (left) and \ref{fig:simu test 2 phifem angle k=1 l=3}, first choosing a fixed inclination angle $\theta_0=\pi/8$, and then varying $\theta_0$ from 0 to $2\pi/7$. The numerical tests show again the optimal convergence of $\phi$-FEM with $\mathbb{P}_1$ finite elements in the $L^2$ and $H^1$ norms, notwithstanding the fact that the level-set function $\phi$ is less regular than assumed in our theoretical results. Note that we have used here the FE of degree $l=3$ to represent the level-set, which is higher than the minimal degree $k+1=2$ suggested by the theory. The situation is here similar to that of the tests case 1-2: the implementation using the lower degree $l=2$ elements (not reported here) is also optimally convergent but turns out to be less robust than $l=3$ wih respect to the placement of $\Omega$ over the mesh (higher oscillations, especially in the $L^2$ error, when varying $\theta_0$).

We have also compared our method with CutFEM \cite{burman2}: Find $u_h\in V_h^{(k)}$ s.t.
\begin{equation*}
\displaystyle\int_{\Omega}\nabla u_h\cdot\nabla v_h+\displaystyle\int_{\Omega} u_h v_h
 +\sigma h\sum_{E\in \mathcal{F}^{\Gamma}} \int_{E} \left[ \ddn{u_h} \right] \left[ \ddn{v_h} \right] =\int_{\Omega}fv_h+\int_{\Gamma}gv_h~~\forall ~v_h\in V_h^{(k)},
\end{equation*}
where 
$\mathcal{F}^{\Gamma}=\{E(\mbox{internal facet of }\mathcal{T}_h)\mbox{ such that }\exists T\in\mathcal{T}_h:T\cap\Gamma\neq \varnothing \mbox{ and }E \in\partial T \}.$

The results are reported at Figs. \ref{fig:simu test 2 error L2 and H1} (right, the simulation at fixed inclination angle $\theta_0$) and \ref{fig:simu test 1 cutfem angle} (simulations with the rotating domain $\Omega$). Comparing two parts of Fig. \ref{fig:simu test 2 error L2 and H1}, we conclude that $\phi$-FEM and CutFEM are both optimally convergent and produce  very similar results. However, looking closer at Figs. \ref{fig:simu test 2 phifem angle k=1 l=3} and \ref{fig:simu test 1 cutfem angle}, we can point out an advantage of the $\phi$-FEM over the CutFEM: the former seems more robust with respect to the position of $\Omega$ over the background mesh, the oscillations of the $L^2$ errors with rotating the domain are more pronounced for the latter method (the $H^1$ errors are almost the same in both cases).


\begin{figure}[H]
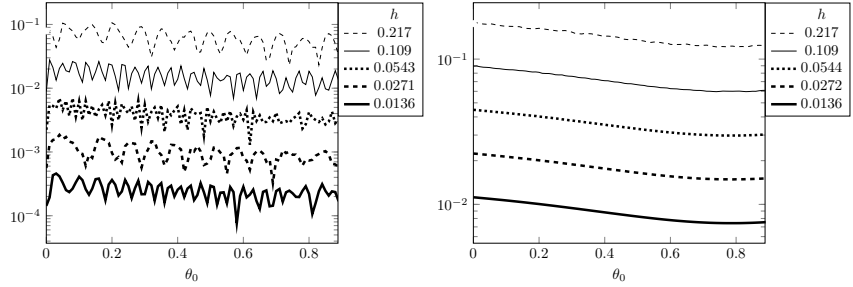

\begin{center}

\caption{Sensitivity of the relative error with respect to $\theta_0$  in CutFEM for the test case 2,  
 $\sigma=0.01$. Left: $L^2$ relative error $\|u-u_h\|_{0,\Omega_h^i}/\|u\|_{0,\Omega_h^i}$; Right: $H^1$ relative error $\|u-u_h\|_{1,\Omega_h^i}/\|u\|_{1,\Omega_h^i}$.  }
\label{fig:simu test 1 cutfem angle}
\end{center}\end{figure}


 \subsection{3rd test case}
 
We here  take $\Omega\subset\mathbb{R}^3$ as the ball of radius $R=0.75$ centered at the origin encapsulated into the box $\mathcal{O}=(-1,1)^3$. $\Omega$ is defined by the level-set function 
 $\phi(x,y,z)=x^2+y^2+z^2-R^2.$
 Fig. \ref{fig:simu domains} right gives an example of mesh $\Th$ for this test case.  
 We choose the exact solution as
$u(x,y,z)=\cos\left(\sqrt{x^2+y^2+z^2}\right).$
The Neumann boundary condition is extrapolated to a vicinity of $\Gamma$ as in the first case. 
Again, we observe in Fig. \ref{fig:simu test 3 error L2 and H1 and cond} the optimal orders of convergence for the $L^2$ and $H^1$ errors and the expexted behaviour of the condition number $\sim 1/h^2$.


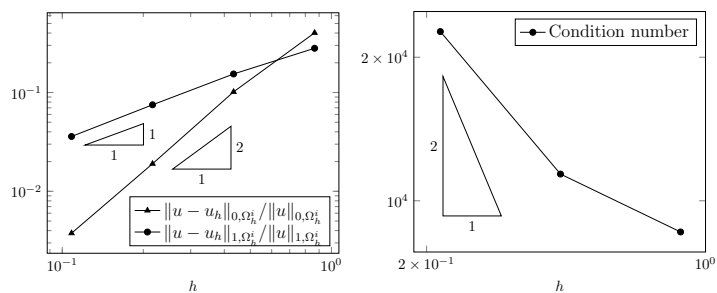
\begin{figure}[H]
\begin{center}
\begin{tikzpicture}[thick,scale=0.56, every node/.style={scale=1.0}]
\begin{loglogaxis}[xlabel=$h$
,legend pos=south east
,legend style={ font=\large}
, legend columns=1]
 \addplot[
 mark=triangle*] coordinates {
(0.866025403784,0.4020093623844348)
(0.433012701892,0.1015394898345765)
(0.216506350946,0.01900553830763456)
(0.10825317547305482,0.0037637898177978035)
};
 \addplot[
 mark=*] coordinates {
(0.866025403784,0.2803979220916107)
(0.433012701892,0.15404484985330788)
(0.216506350946,0.07509791128043267)
(0.10825317547305482,0.03588319632048886)
 };

\logLogSlopeTriangle{0.33}{0.2}{0.45}{1}{
black};
\logLogSlopeTriangle{0.63}{0.2}{0.35}{2}{
black};
 \legend{$\|u-u_h\|_{0,\Omega_h^i}/\|u\|_{0,\Omega_h^i}$,$\|u-u_h\|_{1,\Omega_h^i}/\|u\|_{1,\Omega_h^i}$}
\end{loglogaxis}
\end{tikzpicture}
\begin{tikzpicture}[thick,scale=0.56, every node/.style={scale=1.0}]
\begin{loglogaxis}[xlabel=$h$,xmax=1e0,
ytick={1e4,2e4},xtick={2e-1,1e0},
yticklabels={$10^{4}$,$2\times 10^{4}$},
xticklabels={$2\times 10^{-1}$,$10^{0}$}
,legend pos=north east
,legend style={ font=\large}
, legend columns=1]
 \addplot[
 mark=*] coordinates {
(0.866025403784,8602.79388462)
(0.433012701892,11377.5184058)
(0.216506350946,22638.0663319)

};

 \logLogSlopeTriangleinv{0.3}{0.2}{0.15}{2}{
 black};
 \legend{Condition number}
\end{loglogaxis}
\end{tikzpicture}
\caption{$\phi$-FEM for the test case 3, 
 $\sigma=0.01$, $\gamma_u=\gamma_p=\gamma_{\mbox{div}}=10$, $k=1$ and $l=3$. Left: $L^2$   and  $H^1$  relative error; Right: condition number. }
\label{fig:simu test 3 error L2 and H1 and cond}
\end{center}\end{figure}

\bibliographystyle{abbrv}

\bibliography{biblio}

\begin{thebibliography}{10}

\bibitem{multiphenics}
F.~Ballarin and G.~Rozza.
\newblock multiphenics.
\newblock \url{https://mathlab.sissa.it/multiphenics}, 2020.

\bibitem{boffi13}
D.~Boffi, F.~Brezzi, and M.~Fortin.
\newblock {\em Mixed finite element methods and applications}, volume~44 of
  {\em Springer Series in Computational Mathematics}.
\newblock Springer, Heidelberg, 2013.

\bibitem{boiveau18}
T.~Boiveau, E.~Burman, S.~Claus, and M.~Larson.
\newblock Fictitious domain method with boundary value correction using
  penalty-free {N}itsche method.
\newblock {\em J. Numer. Math.}, 26(2):77--95, 2018.

\bibitem{burmanghost}
E.~Burman.
\newblock Ghost penalty.
\newblock {\em Comptes Rendus Mathematique}, 348(21):1217--1220, 2010.

\bibitem{burman15}
E.~Burman, S.~Claus, P.~Hansbo, M.~Larson, and A.~Massing.
\newblock Cutfem: discretizing geometry and partial differential equations.
\newblock {\em International Journal for Numerical Methods in Engineering},
  104(7):472--501, 2015.

\bibitem{burman2}
E.~Burman and P.~Hansbo.
\newblock Fictitious domain finite element methods using cut elements: Ii. {A}
  stabilized {N}itsche method.
\newblock {\em Applied Numerical Mathematics}, 62(4):328--341, 2012.

\bibitem{phiFEM}
M.~Duprez and A.~Lozinski.
\newblock $\phi$-{FEM}: a finite element method on domains defined by
  level-sets.
\newblock {\em SIAM J. Num. Anal., to appear; preprint on arXiv:1903.03703
  [math.NA]}, 2020.

\bibitem{glowinski92}
R.~Glowinski and T.~Pan.
\newblock Error estimates for fictitious domain/penalty/finite element methods.
\newblock {\em Calcolo}, 29(1):125--141, 1992.

\bibitem{HaslingerRenard}
J.~Haslinger and Y.~Renard.
\newblock A new fictitious domain approach inspired by the extended finite
  element method.
\newblock {\em SIAM Journal on Numerical Analysis}, 47(2):1474--1499, 2009.

\bibitem{freefem}
F.~Hecht.
\newblock New development in {FreeFem++}.
\newblock {\em J. Numer. Math.}, 20(3-4):251--265, 2012.

\bibitem{lehren16}
C.~Lehrenfeld.
\newblock High order unfitted finite element methods on level set domains using
  isoparametric mappings.
\newblock {\em Comput. Methods Appl. Mech. Engrg.}, 300:716--733, 2016.

\bibitem{nocut}
A.~Lozinski.
\newblock Cut{FEM} without cutting the mesh cells: a new way to impose
  {D}irichlet and {N}eumann boundary conditions on unfitted meshes.
\newblock {\em Comput. Methods Appl. Mech. Engrg.}, 356:75--100, 2019.

\bibitem{mittal2005immersed}
R.~Mittal and G.~Iaccarino.
\newblock Immersed boundary methods.
\newblock {\em Annu. Rev. Fluid Mech.}, 37:239--261, 2005.

\bibitem{moes06}
N.~Mo{\"e}s, E.~B{\'e}chet, and M.~Tourbier.
\newblock Imposing dirichlet boundary conditions in the extended finite element
  method.
\newblock {\em International Journal for Numerical Methods in Engineering},
  67(12):1641--1669, 2006.

\bibitem{moes99}
N.~Mo{\"e}s, J.~Dolbow, and T.~Belytschko.
\newblock A finite element method for crack growth without remeshing.
\newblock {\em International journal for numerical methods in engineering},
  46(1):131--150, 1999.

\bibitem{osher}
S.~Osher and R.~Fedkiw.
\newblock {\em Level set methods and dynamic implicit surfaces}, volume 153 of
  {\em Applied Mathematical Sciences}.
\newblock Springer-Verlag, New York, 2003.

\bibitem{sukumar2001}
N.~Sukumar, D.~Chopp, N.~Mo{\"e}s, and T.~Belytschko.
\newblock Modeling holes and inclusions by level sets in the extended
  finite-element method.
\newblock {\em Computer methods in applied mechanics and engineering},
  190(46-47):6183--6200, 2001.

\end{thebibliography}

\end{document}